\newtheorem{theorem}{Theorem}
\theoremstyle{plain}
\newtheorem{lemma}{Lemma}
\newtheorem{proposition}{Proposition}
\newtheorem{corollary}{Corollary}
\newtheorem{question}{Question}
\theoremstyle{definition}
\newtheorem{definition}{Definition}
\theoremstyle{remark}
\newtheorem{remark}{Remark}
\begin{document}

\title{Polynomial Time Relatively Computable Triangular Arrays in a Multinomial Setting}

\date{6 April 2016}

\author{Vladimir \mbox{Dobri\ensuremath{\acute{\text{c}}^{\dagger{ }}}} }
\address{Department of Mathematics, Lehigh University, 14 East Packer Avenue, 
Bethlehem, PA 18015}
\email{vd00@lehigh.edu}

\author{Patricia Garmirian}
\address{Department of Mathematics, Tufts University, 503 Boston Avenue, 
Medford MA 02155}
\email{Patricia.Garmirian@tufts.edu}

\author{Lee J. Stanley}
\address{Department of Mathematics, Lehigh University, 14 East Packer Avenue, 
Bethlehem, PA 18015}
\email[Corresponding author]{ljs4@lehigh.edu}

\thanks{Garmirian and Stanley dedicate this paper to the memory of
our dear departed mentor and friend, Vladimir \mbox{Dobri\ensuremath{\acute{\text{c}}}}}

\subjclass[2010]{Primary 60G50, 60F15 ; Secondary 68Q15, 68Q17, 68Q25}

\keywords{  Central Limit Theorem, Almost Sure Convergence, Strong Triangular Array Representations, Admissible
Permutations, Polynomial Time Computability, Sums of Multinomial Coefficients}

\begin{abstract}
We extend the methods and results of {[}\ref{DSS}{]}
to the setting of
multinomial distributions satisfying certain properties.  These include
all the multinomial distributions arising from the direct proof of the 
Central Limit Theorem given in {[}\ref{DG}{]}, which, by results of that paper,
constitutes essentially full generality for the situations in which
the Central Limit Theorem holds.

\end{abstract}

\maketitle

\makeatletter

\section{Introduction}
\label{sec:1}
The two papers, {[}\ref{DG}{]} and {[}\ref{DSS}{]}, provide the context for our work here.  The first of these papers provides a direct proof of the Central Limit Theorem that proceeds by introducing a sequence of approximating multinomial distributions.  
The second deals more narrowly with the sequence $\left\{ S_n\right\}$ based on Rademacher random variables, so that
the basic setting is that of the binomial distributions with $p = \frac{1}{2}$.  It focuses on the notion of  {\it strong, trim } triangular array representations of the sequence of quantiles of the sequence $\left\{ S_n\right\}$ in this restricted setting, classifying them by sequences of {\it admissible permutations} and analyzing the complexity of the simplest of these classifying sequences.

We extend the methods and many (but not all) of the results of the latter to the setting of multinomial distributions provided by the former.  These multinomial distributions essentially capture (via sufficiently good approximation) the full generality of situations where the Central Limit Theorem holds:  this is one of the main results of {[}\ref{DG}{]}.  Thus, the results of this paper represent significant progress on the program laid out in the first paragraph of {[}\ref{DSS}{]}:  obtaining (and analyzing
the complexity of) {\it strong trim triangular representations for the sequences of quantiles} of weakly convergent sequences. 

As set forth in {[}\ref{DSS}{]}, a triangular array representation of a sequence $\left\{ X_n\right\}$ of random variables is {\it strong} if, for each
$n,\ X_n$ is equal pointwise, as a function, not just in distribution, to the sum of the random variables in the $n^{\text{th}}$ row
of the triangular array.  Thus (as was already noted in {[}\ref{DSS}{]}), there is very clear analogy:  strong triangular array
representations are to triangular array representations as almost sure convergence is to weak convergence.  Therefore, it is very
natural to seek strong triangular array representations for sequences that converge almost surely, as do the sequences of
quantiles of weakly convergent sequences.   It is reasonable to expect that a more detailed understanding
of the sequence of quantiles can be obtained via analysis in terms of a (particularly nice) strong triangular array representation thereof.

The issue of ``trimness'' will be discussed in more detail below, in the second paragraph of (1.1); for now, it will suffice to say that it addresses, for each $n$, the issue of how many bits in the binary expansion of $x \in (0,1)$
are required to determine the values of the random variables in the $n^{\text{th}}$ row of the triangular array.  
In the setting of {[}\ref{DSS}{]}, it was shown (Proposition 2 of that paper) that it is impossible
that each of the random variables in the $n^{\text{th}}$ row 
depends only on a single bit (even if the bit in question is allowed to vary with the random variable involved);
as noted there, this can also be seen via other routes.  The analogue of Proposition 2 of {[}\ref{DSS}{]}
holds in the setting of this paper, but we do not prove it here. 
In the setting of {[}\ref{DSS}{]}, the trimness notion was that each of the random variables in the $n^{\text{th}}$ row was to depend
only on the first $n$ bits; this was presented there as the natural ``next best hope'' as to how much information
about $x$ was needed.  The representations we obtain in this
paper will be ``trim'' in a sense that is the clear analogue of the trimness notion of {[}\ref{DSS}{]}:  the random variables in
the $n^{\text{th}}$ row will depend only on the first $n(M+1)$ bits, where $M$ is a parameter associated with the multinomial
distribution in question.  It was shown in {[}\ref{DSS}{]} (Proposition 1) that ``trimness does not come for free'':  there are
strong triangular array representations that are not trim; while this result goes over to the setting of this paper, via similar
arguments, we do not prove it here.  

As was noted in (1.3.1) of {[}\ref{DSS}{]}, there is another view of trimness, suggested
by A. Nerode (private communication); trimness is naturally associated with a Lipschitz continuity condition with $\delta = \epsilon$
for a transformation from $(0,1)$ to the the product of the $\{ -1, 1\}^n$ (with the product of the discrete topologies on
the $\{ -1, 1\}^n$).  This view of trimness carries over to the setting of this paper, by replacing each $\{ -1,1\}^n$ by
$\{ -1,1\}^{n(M+1)}$. 

The content of Corollary 1 of {[}\ref{DSS}{]} and the analogous result, Corollary 1 of this paper, is that strong trim triangular
array representations correspond exactly to sequences of admissible permutations.  Any admissible permutation can be
thought of as providing a ``rearrangement'' of $(0,1)$ (the domain of the random variables)  that
transforms the disorderly $S_n$ into the orderly step function, $S^*_n$.  In the setting of  {[}\ref{DSS}{]}, the rearrangement is literally a permutation of the dyadic intervals (of the depth appropriate for the permutation).  The situation is somewhat more complicated in this paper and is addressed in (2.1.1) and especially (2.1.2) where the sets $E_{n\ell}$ are introduced.  On each of these sets, $S_n$ is constant and each of these sets is the union of dyadic intervals of a fixed depth still greater than $n(M+1)$.     By permuting the second index ($\ell$), the rearrangement corresponding to a level $n$ admissible permutation maps the sets 
$E_{n,\ell}$ to depth $n(M+1)$ dyadic intervals,$D_{n,\ell '}$.

Further, the translation into the language
of admissible permutations makes it possible to pose and answer questions about the complexity of strong trim triangular
array representations in terms of the complexity of the corresponding sequence of admissible permutations.  These
measure the complexity of the rearrangements needed to transform the sequence $\left\{ S_n\right\}$ into the almost
surely convergent sequence of its quantiles.

We will say more, below, about the connections and differences between this paper, and the papers {[}\ref{DG}{]} and {[}\ref{DSS}{]}, but both of these end up working closely with a sequence, $\left ( R_n\vert n \in \mathbb{Z}^+\right )$, of i.i.d. random variables defined on $(0,1)$; both consider $\displaystyle{S_n := \sum_{i=1}^nR_n}$.  The main concern of {[}\ref{DSS}{]} was, for a very specific choice of $R_n$, to provide strong trim triangular array representations, $\left ( R^*_{n,i}\vert n \in \mathbb{Z}^+\text{ and } 1 \leq i \leq n\right )$  of the sequence, $\left ( S^*_n\vert n \in \mathbb{Z}^+\right )$ of {\it quantiles} of the sequence $\left ( S_n\vert  n \in \mathbb{Z}^+\right )$, and to analyze the complexity of the simplest of these representations.
We carry out this program here, starting from the more complicated and general sequences $\left ( R_n\vert n \in \mathbb{Z}^+\right )$ whose partial sums, $S_n$, are the multinomial distributions of {[}\ref{DG}{]}.

Echoing the above discussion, what made the triangular array representations of {[}\ref{DSS}{]} {\it strong} is that for each 
$\displaystyle{n,\ \sum_{i=1}^nR^*_{n,i}}$ is not only equal in distribution to $S^*_n$, but it is equal to it pointwise, as a function on $(0,1)$.  The representations $\left ( R^*_{n,i}\vert n \in \mathbb{Z}^+\text{ and } 1 \leq i \leq n\right )$ we obtain in
 this paper will be strong in exactly the same way.  The issue of ``trimness'' was also discussed above, and, as noted there,  will be discussed in more detail below, in the second paragraph of (1.1).

The first main result of {[}\ref{DSS}{]} was (for each $n$) to classify 
by {\it admissible }permutations of $\left \{ 0, \ldots , 2^n - 1\right\}$
all of the sequences $\left ( R^*_{n,i}\vert 1 \leq i \leq n\right )$
that sum to $S^*_n$ and satisfy the ``trimness'' condition along with some
additional properties.  The analogue of this result is Theorem 1 of this paper, proved in (2.2).  Now the classifying
permutations are permutations of $\left\{ 0, \ldots , 2^{n(M+1)} - 1\right\}$, where $M$ is a parameter associated
with one of the multinomial distributions arising from the setting of {[}\ref{DG}{]}, and admissibility is defined in an analogous
fashion appropriate to the multinomial distribution under consideration.  An immediate consequence is that
sequences, $\left ( \pi_n\vert n \in \mathbb{Z}^+\right )$, of admissible permutations classify strong trim
triangular array representations $\left ( R^*_{n,i}\vert n \in \mathbb{Z}^+\text{ and } 1 \leq i \leq n\right )$ of
$\left ( S^*_n\vert n \in \mathbb{Z}^+\right )$; this is Corollary 1, also (2.2).  

As was the case in {[}\ref{DSS}{]}, analysis of the admissibility
condition leads readily to the conclusion that there are ``many'' admissible permutations of $\left\{ 0, \ldots , 2^{n(M+1)} - 1\right\}$.  This is item (3) of Lemma 1, proved in (3.1).  Combining Theorem 1, Corollary 1 and Lemma 1 immediately gives Corollary 2, also in (3.1):  the existence of (continuum many) strong trim triangular array representations of $\left ( S^*_n\vert n \in \mathbb{Z}^+\right )$.
The second main result of {[}\ref{DSS}{]}  was the polynomial time computability of the sequence 
$\left ( F_n\vert n \in \mathbb{Z}^+\right )$,
of admissible permutations in which each $F_n$ is, in a suitable sense, the simplest possible 
permutation of $\left\{ 0, \ldots , 2^n - 1\right\}$; by extension, the
corresponding strong trim triangular array representation is also polynomial time computable.  Here too, we obtain
the analogous results:  these are Theorems 2, 3 in (3.2).   It would be too much to hope for polynomial time
computability outright, because of the dependence on the random variable $O$.  Instead we will have polynomial
time computability relative to a certain function $\tau^O_1$, introduced in Definition 4 in (3.1).  This function
encodes, in a very natural way, information about how level $n$ multinomial vectors map to the values of $S_n$.
We give a more detailed summary of the route to Theorems 2 and 3 at the beginning of \S 3.
\subsection{Preliminaries, Notation, Conventions}
\label{subsec:1.1}
Throughout this paper we work in the fixed probability space consisting
of the open real unit interval $(0,1)$ equipped with Lebesgue measure.
For $x \in (0,1)$, write:
\[
x = \sum_{i=1}^\infty \varepsilon_i(x)2^{-i}\text{, with each }\varepsilon_i(x) \in \{ 0,1\}\text{ and }
\varepsilon_i(x) = 0\text{ for infinitely many }i. 
\]

We work relative to 
a fixed random variable, $O$, with domain $(0,1)$.
In (1.2.1), below, we discuss the results of {[}\ref{DG}{]}, which
provide the context giving rise to 
$O$ and its properties.  For now, we lay out 
these properties as assumptions.

Associated with $O$ is a fixed non-negative integer, $M$.  We
assume that:
\[
\text{for }x \in (0,1),\ O(x)\text{ depends bijectively on }\left ( \varepsilon_1(x)\,\ \ldots \varepsilon_{M+1}(x)\right ).
\]
Thus, $O$ takes on $m = 2^{M+1}$ outcomes, which we enumerate in increasing order as   
$o_1,\ \ldots ,\ o_m$.  We make the further assumptions that these outcomes are equally likely,
that their sum is 0 and that the sum of their squares is 1.  
All of this will follow from the account, in (1.2.1), of the results of {[}\ref{DG}{]}; the displayed equation
for $O_M$ that occurs there is particularly relevant. 
Taken together, the increasing enumeration of the $o_s$ and the bijection between them and
the $\left ( \varepsilon_1(x)\,\ \ldots \varepsilon_{M+1}(x)\right )$ provide us with a fixed
enumeration, $\left ( \varsigma_s\vert 1 \leq s \leq m\right )$ of $\{ 0,1\}^{M+1}$:
\[
\varsigma_s\text{ is such that }O(x) = o_s\text{ for all }x\text{ such that } \left ( \varepsilon_1(x)\,\ \ldots \varepsilon_{M+1}(x)\right ) = \varsigma_s.
\]

The ``trimness'' condition of this paper is that a random variable, $Y$, defined on $(0,1)$ is {\it n-trim} if
and only if $Y$ depends only on $\left ( \varepsilon_1(y),\ \ldots ,\ \varepsilon_{n(M+1)}(y)\right )$.  Thus,
our assumption is that $O$ is {\it 1-trim}.

One of the main devices of {[}\ref{DG}{]} is to create an i.i.d family of ``copies'', $R_i$, of $O$, for positive
integers, $i$.  This is accomplished by taking $R_i(x)$ to be $O(P_i(x))$, where, for each $x \in (0,1),\
\left ( P_i(x)\vert i \in \mathbb{Z}^+\right )$ is a (uniformly defined) system of copies of $x$.
At a very high level, the $P_i$ can be defined by creating a pairwise disjoint
family $\left ( J_i\vert i \in \mathbb{Z}^+\right )$ of infinite subsets of $\mathbb{Z}^+$, 
and then ``taking $P_i(x)$ to be
the projection of $x$ onto the set of coordinates $J_i$.''  If $x$ were in
Cantor space rather than in $(0,1)$, this would be completely accurate; we
will make it precise in the next paragraph.  First, however, we note that while it is natural to think
that the $J_i$ should form a partition of $\mathbb{Z}^+$, in our implementation, this
will not be the case, and this will offer definite advantages.

For each positive integer, $i$, let $\left ( j_{i,k}\vert k \in \mathbb{Z}^+\right )$
be the increasing enumeration of $J_i$ and let:
\[
\overline{J}_i := \left\{ j_{i,1}, \dots , j_{i,M+1}\right\}\text{ and } \overline{J}^n := \overline{J}_1 \cup \ldots \cup
\overline{J}_n.
\]
We then take $P_i(x)$ to be that member of $(0,1)$, satisfying: 
\[
\text{for all }k,\ \varepsilon_k\left ( P_i(x)\right ) = \varepsilon_{j_{i,k} }(x).
\]

In {[}\ref{DG}{]}, $J_i$ is chosen to be ``the $i^{\text{th}}$ column of a triangular array of
positive integers'' (not to be confused with the triangular arrays of random variables
which are the main objects of study in this paper)
in which the ``rows'' {\it do }partition $\mathbb{Z}^+$,
where each row is a finite set of consecutive integers, and where
the sequence of row lengths, $\left ( \ell g_\rho\vert \rho \in \mathbb{Z}^+\right )$, 
is monotone non-decreasing and unbounded.   Note that with the stipulation that
the rows form a partition into finite sets of consecutive integers, the triangular array of
positive integers is completely specified by specifying the sequence of row lengths
$\left ( \ell g_\rho\vert \rho \in \mathbb{Z}^+\right )$.

\mbox{Dobri\ensuremath{\acute{\text{c}}}} and Garmirian adopt a specific choice of
$\left ( \ell g_\rho\vert \rho \in \mathbb{Z}^+\right )$ but note that, for the
results of {[}\ref{DG}{]}, any reasonable choice suffices.  For the purposes of this paper, and, in particular, for
the proof of Lemma 2, we make an apparently somewhat odd specific choice of 
$\left ( \ell g_\rho\vert \rho \in \mathbb{Z}^+\right )$.
For positive integers, $\rho$, we let:
\[
\ell g_\rho\text{ be the unique positive integer }b\text{ such that }(b-1)(M+1) < \rho \leq b(M+1).
\]
Thus, the rows come in ``blocks'' of
size $(M+1)$; for rows in the $b^{\text{th}}$ block, the row length is $b$.  We also
decompose a row index, $\rho$, as:
\[
(\ell g(\rho)-1)(M+1) + r(\rho)\text{, with }1 \leq r \leq M+1.
\]
Thus, $\ell g(\rho)$ is the block index of $\rho$ while $r(\rho)$ is the index of $\rho$ within
its block.

Via the $R_i$, for all positive integers, $n,\ O$ determines a multinomial distribution,
$\text{MN}^O_n$ with domain $(0,1)$.
The values, $\text{MN}^O(x)$, of the multinomial distributions are the  inner products, $k_1(x)o_1 + \ldots + k_m(x)o_m$,
where the $k_s(x)$ are the frequencies with which $o_s$ occurs in $\left ( R_1(x),\ldots , R_n(x)\right )$.  Thus, the
$k_s(x)$ are non-negative integers that sum to $n$ and so there are
exactly $\binom{n+m-1}{m-1}$  vectors $\left ( k_1(x), \ldots , k_m(x)\right )$ and at most this
many values of $\text{MN}^O_n$.

With all of this in place, we now``zoom in a bit more closely on'' our main objects of study.
For $x \in (0,1)$, and positive integers, $i, n$, we take:
\begin{equation}
R_i(x) := O\left ( P_i(x)\right )\text{ and }
S_n(x)  := \sum_{i = 1}^nO\left ( P_i(x)\right ) =  \sum_{i = 1}^nR_i(x).
\end{equation} 
As will be discussed in the next subsection, it follows from the results of {[}\ref{DG}{]}
that $S_n/\sqrt{n}$ converges (weakly) to the standard normal with domain
$(0,1)$.  Note that in view of our assumption on $O$, we have that:
\begin{equation}
\text{each }R_i\text{ depends only on the }j \in \overline{J}_i\text{ and so each }S_n\text{ depends only on
the }j \in \overline{J}^n.
\end{equation} 

As in {[}\ref{DSS}{]}, we are also interested in the quantiles, $S^*_n$, of $S_n$.  
The significance of the quantiles comes, in large measure,
from the work of Skorokhod.  He showed, {[}\ref{Skorokhod}{]}, that if 
$\left\{X_n\right\}$ is a sequence of random variables (on {\it any} probability
space) converging weakly to $X$, then the sequence of quantiles, $X^*_n$
(which are defined on $(0,1)$, regardless of the domain of the original $X_n$), 
converges almost surely to $X^*$, the quantile of $X$ (also defined on $(0,1)$).

Letting $C$ be the cumulative
distribution of $X_n$:
\begin{equation*}  X_n^*(x) := \text{inf} \left \{ t \in \mathbb{R}\vert C(t) \geq x\right \}.
\end{equation*}
It is well-known that $X_n$ and $X_n^*$ are equal in distribution.
It then follows from the results of {[}\ref{DG}{]}, that:
\begin{equation*}  \text{The sequence }\{ S^*_n/\sqrt{n}\}\text{ converges almost surely to
the standard normal with domain }(0,1).
\end{equation*}

We will
focus on representations:
\[
S^*_n = \sum^n_{i=1}R^*_{n,i}\text{ where each }R^*_{n,i}\text{ is }n-\text{trim}
\]
(and satisfies a number of other properties laid out in the statement of Theorem 1).
The admissibility condition for permutations of $\left\{ 0, \ldots , 2^{n(M+1)}-1\right\}$ will be given in (2.1.4), below.

For $n > 0$ and non-negative integers $\ell < 2^{n(M+1)}$, we let $D_{n,\ell}$ be the $\ell^{\text{th}}$ depth 
$n(M+1)$ dyadic interval, i.e.:
\[
D_{n,\ell} = 
\begin{cases}
     \left ( 0,2^{-n(M+1)}\right )\text{ if }\ell = 0,\\
    \left [ \ell 2^{-n(M+1)}, (\ell + 1)2^{-n(M+1)}\right )\text{ otherwise.}
\end{cases}
\]
We argue, in (2.1.1.) below, that each $S^*_n$ is constant on each $D_{n,\ell}$.  As in {[}\ref{DSS}{]}, this
will allow us to define ``integer versions'', $\text{I}S^*_n$, of the $S^*_n$:
\[
\text{I}S^*_n(\ell ) =\text{ the constant value of }S^*_n\text{ on } D_{n,\ell}\text{, for }0 \leq \ell < 2^{n(M+1)}.
\]

As in {[}\ref{DSS}{]}, this serves as a paradigm case for introducing integer versions of notions (usually functions, but sometimes
relations) naturally defined on $(0,1)$, when the notion is invariant on a system of subsets of $(0,1)$, indexed by integers.
The indexing will typically also depend on $n$ as a parameter, and the subsets in question will typically be either the $D_{n,\ell}$ or the $E_{n,\ell}$  (the analogues, for $S_n$, of the $D_{n,\ell}$ 
which we introduce in (2.1.2), below). 

Our approach to complexity issues will follow that of {[}\ref{DSS}{]}, especially as laid out
in the final two paragraphs of subsection (1.2) of that paper.  We also include
the references {[}\ref{Clote}{]}, {[}\ref{Papa}{]}
from that paper for general background on
computational complexity.

\subsection{Connections with {[}\ref{DG}{]} and {[}\ref{DSS}{]}}
\label{subsec:1.2}
In this subsection we elaborate further on the connections between this paper and the work of {[}\ref{DG}{]} (in (1.2.1))
and that of {[}\ref{DSS}{]} (in (1.2.2)). 
\subsubsection{The Context Provided by the Results of \textnormal{{[}\ref{DG}{]}}}
\label{subsubsec:1.2.1}
In {[}\ref{DG}{]}, Dobri\'c and Garmirian give an explicit  proof of the CLT directly from the definition of 
weak convergence, which states that (with $S$ a Polish space equipped
with the $\sigma-\text{algebra }\mathcal{B}(S)$ of Borel subsets of $S$) a sequence of measures $\mu_n$ (on $S$)
converges to $\mu$ weakly provided that for each bounded, continuous 
function $f:S\rightarrow{\mathbb{R}}$,
\[
\lim_{n\rightarrow{\infty}}\int\, f(x)\, d\mu_n(x)=\int\, f(x)\, d\mu(x).
\]
In the setting of {[}\ref{DG}{]}, $S$ is $\mathbb{R}$  and $\left\{\mu_n\right\}$ is the sequence of measures 
induced by a specified i.i.d. sequence of random variables.

The starting point in {[}\ref{DG}{]} is not the random variable $O$ of (1.1), but a much more general random variable, $Q$, on
which the only assumptions are that $Q$ has mean 0 and variance 1.  The random variable $O$ of (1.1) will be obtained
as a ``truncated version'' of the expansion of $Q$ with respect to the Haar basis.  Such an expansion is available
since $Q\in L^2(0,1)$.

Using that paper's verions of the sort of $P_i$ described in (1.1), an i.i.d. sequence $\left ( Q_i\vert i \in \mathbb{Z}^+\right )$ of copies
of $Q$ is obtained by taking $Q_i(x)$ to be $Q\left ( P_i(x)\right )$.  Expanding $Q$, respectively the $Q_i$,
with respect to the Haar basis yields:
\[
Q(x)=\sum_{k=0}^\infty 2^{\frac{k}{2}}c_{k,\lfloor2^k x\rfloor}(-1)^{\varepsilon_{k+1}(x)},
\]
\[
Q_i(x)=\sum_{k=0}^\infty 2^{\frac{k}{2}}c_{k,\lfloor2^kP_i(x)\rfloor}(-1)^{\varepsilon_{k+1}(P_i(x))}.
\]

The next step in {[}\ref{DG}{]} is to consider the following sums of $n$ i.i.d. random variables:
\[
\Sigma_n(x) := \sum_{i=1}^nQ_i(x).
\]
In {[}\ref{DG}{]}, these $\Sigma_n$ were denoted by $S_n$.  The following truncated versions 
of $Q$, the $Q_i$ and $\Sigma_n$ were also
considered in {[}\ref{DG}{]}; the last was denoted there by $S_{n,M}$.  For 
$M\in{\mathbb{N}}$, define:
\[
O_M(x) := \sum_{k=0}^M2^{\frac{k}{2}}c_{k,\lfloor2^kx\rfloor}(-1)^{\varepsilon_{k+1}(x)}.
\]
It follows from properties of the outcomes that $O_M$ will also have mean 0 and that
\[
 Var\left ( O_M\right ) = \theta^2_M = \sum_{j=0}^M\sum_{k=0}^{2^j-1}c_{j,k}.
\]
In {[}\ref{DG}{]}, $\sigma^2_M$ is used in place of $\theta^2_M$.

For positive integers, $i$, define:
\[
R_{i,M}(x) := O_M\left ( P_i(x)\right ) \left ( =  \sum_{k=0}^M2^{\frac{k}{2}}c_{k,\lfloor2^kP_i(x)\rfloor}(-1)^{\varepsilon_{k+1}(P_i(x))}\right )\text{ and: }
\]
\[
\Sigma_{n,M}(x):=\sum_{i=1}^nR_{i,M}(x).
\]

Dobri\'c and Garmirian showed that for each bounded and continuous $f:\mathbb{R} \to \mathbb{R}$ and for each $\epsilon > 0$, there exists a natural number $M_0$ such that for all $M \geq M_0$, with $\theta_M$ as above, 
\[
\left|\int_0^1f\left ( \frac{\Sigma_n(x)}{\sqrt{n}}\right )\, d\lambda (x) - \int_0^1f\left(\frac{\Sigma_{n,M}(x)}{\theta_M\sqrt{n}}\right )\, d\lambda (x)\right| < \epsilon.
\]
Thus, they showed that these Haar expansions are in fact ``sufficiently close'' to their truncated versions.  They then further approximated these truncated versions by the standard Gaussian measure on $\mathbb{R}$, completing their proof of the CLT.

It is then clear that for each $i$, each $M$ and for all $x$:
\begin{equation}
R_{i,M}(x)\text{ depends only on }\left (\varepsilon_{j_{i,k}}(x)\vert 1 \leq k \leq M+1 \right )\text{ and so} 
\end{equation}
\begin{equation}
\text{for all }x,\ \Sigma_{n,M}(x)\text{ depends only on }\left (\varepsilon_{j_{i,k}}(x)\vert 
1 \leq k \leq M+1, 1 \leq i \leq n \right ).
\end{equation}

The context for this paper is then provided by fixing a suitable $M$, letting $O$ be $O_M$, 
 and (referring back to Equation (1)) letting
$R_i = O\circ P_i = R_{i,M}$ and $S_n$ be $\Sigma_{n,M}$, but with the specific choice of the $P_i$ described
in (1.1).

\subsubsection{The Special Case of \textnormal{{[}\ref{DSS}{]}}:  $M = 0$}
\label{subsubsec:1.2.2}  Here, we indicate how the work of {[}\ref{DSS}{]} can and should be understood as the
analysis of the special case $M = 0$, and also discuss similarities and some differences in the approach taken in
this paper.  The {[}\ref{DSS}{]} analogue of the random variable $O$ should be one of two Rademacher random variables  
defined on $(0,1)$, taking on values -1 and 1, each with probability 1/2, and depending only on $\varepsilon_1$.
This brings us to a minor discordance between the treatment in {[}\ref{DSS}{]} and that of {[}\ref{DG}{]}:  in {[}\ref{DSS}{]}, implicitly, $O(x)$
is taken to be $\displaystyle{(-1)^{1+\varepsilon_1(x)}}$, so that the natural order on $(0,1)$ coincides with
the lexicographic order on $\{ -1, 1\}^{\mathbb{Z}^+}$,  whereas the true analogue of the treatment in {[}\ref{DG}{]} would take
$O(x)$ to be $\displaystyle{(-1)^{\varepsilon_1(x)}}$.  Here, we have opted for the latter choice, so that the analogy
with {[}\ref{DSS}{]} is imperfect in this minor respect.  The role of $O$ in {[}\ref{DSS}{]} is left in the background, however, mainly because 
the notions developed above depend so much more transparently on $x$ in the setting of that paper.  This will be a recurring theme, and we shall comment on its influence in various places in what follows.

Continuing with the system of analogies, the {[}\ref{DSS}{]} analogue of the triangular array of integers is simply
the diagonal:  each ``row'' has length 1 and $i$ is the unique entry in the $i^{\text{th}}$ row.  Thus,
(in view of the observations in the previous paragraph) the {[}\ref{DSS}{]} analogue of $R_i$ is  $\displaystyle{R_i(x) = 
(-1)^{1+\varepsilon_i(x)}}$.  The analogue of $P_i$ is denoted by $X_i$ in {[}\ref{DSS}{]}, and $X_i(x)$ is simply
$\varepsilon_i(x)$.  In {[}\ref{DSS}{]}, the $R_i$ are simply taken to be given, as in the previous sentence, 
rather than ``created'' as copies of $O$
via the various $P_i$.  

In view of the preceding, note that the {[}\ref{DSS}{]} analogue of $\overline{J}_i$ is simply $\{ i\}$.  One motivation for the
choice we have made, above, of the triangular array of integers, is to try to recapture as much as possible
of the simplicity of this situation; our $\overline{J}_i$ also are not only pairwise disjoint, but come in
consecutive blocks:  if $i_1 < i_2$ then all members of $\overline{J}_{i_1}$ are less than
all members of $\overline{J}_{i_2}$.  In fact, specifying this property along with the specification
that the cardinality of each $\overline{J}_i$ is $M+1$ almost completely determines the choice of the $P_i$,
we have made in this paper and described in (1.1),
in that it requires that each row length is repeated {\it at least, but not necessarily exactly }$M+1$ times.  After the
proof of Lemma 2, in (3.2), we will indicate exactly how this
faciilitates the proof of this Lemma.
\section{Theorem 1 and Corollary 1}
\label{sec:2}
\subsection{Preliminaries for Theorem 1}
\label{subsec:2.1}
\subsubsection{Level Sets for $S^*_n$}
\label{subsubsec:2.1.1}
As in {[}\ref{DSS}{]}, the level sets for $S^*_n$ are unions of dyadic intervals, but here they are of depth $n(M+1)$, the
$D_{n,\ell}$ introduced in the penultimate paragraph of (1.1).
This is because the intervals on which $S^*_n$ is constant have lengths which are integer multiples of $2^{-n(M+1)}$.
This, in turn, follows from the fact that $S^*_n$ is the inverse of the cdf of $S_n$.
\subsubsection{Level Sets for $S_n$}
\label{subsubsec:2.1.2}
Here, as a result of the multinomial distribution and the related ``shuffle'' involved in the definition of the $P_i(x)$,
the situation is rather different.  The level sets for $S_n$ will not be unions of depth $n(M+1)$ dyadic
intervals; greater depth is necessarily involved once we leave behind the simple $M = 0$ case of {[}\ref{DSS}{]}.  The correct picture emerges from a more detailed analysis which we now undertake.

Recall the $J_i$ from (1.1) and that their increasing enumerations
are the $\left ( j_{i,k}\vert k \in \mathbb{Z}^+\right )$. Recall also from there the $\overline{J}_i$ and
the $\overline{J}^n$.
Also, as noted in Equations (2) and (3), above, 
the sequence $\left ( \varepsilon_{j_{i,k}}(x)\vert 1 \leq k \leq M+1\right )$ completely determines the value
of $R_i(x)$.  

Now, fix $n \geq 1$.  The sets $\overline{J}_i$,  for $1 \leq i \leq n$, are (of course) pairwise
disjoint and therefore, their union, $\overline{J}^n$, has cardinality $n(M+1)$.   So, as noted in Equations (3) and (4), above,
taken together, $\left ( \varepsilon_{j_{i,k}}(x)\vert
1 \leq i \leq n,\ 1 \leq k \leq M\right )$ completely determines the sequence $\left ( R_i(x)\vert 1 \leq i \leq n\right )$,
and therefore determines $S_n(x)$, the sum of this sequence.  Let 
$\left ( \vec{e}_\ell\vert 0 \leq \ell < 2^{n(M+1)}\right )$
be the increasing enumeration of $\{ 0,1\}^{\overline{J}^n}$ with respect to lexicographic order; thus, each
$\vec{e}_\ell$ is a bitstring with domain $\overline{J}^n$; we'll denote its value at a coordinate $j \in \overline{J}^n$ by
$\left ( \vec{e}_\ell\right )_j$.
Finally, define
\[
E_{n,\ell} := \left \{ x \in (0,1)\vert \varepsilon_j(x) =  \left ( \vec{e}_\ell\right )_j\text{ for all } j \in \overline{J}^n\right \}.
\]
Then, each $E_{n,\ell}$ has measure $2^{-n(M+1)}$ and the level sets of $S_n$ are unions of these $E_{n,\ell}$.

\subsubsection{Sequences of binary digits and outcomes}
\label{subsubsec:2.1.3}
For $x \in (0,1)$, we set: 
\[
\vec{b}_n(x) := 
\left ( \varepsilon_1(x), \ldots , \varepsilon_{n(M+1)}(x)\right ),
\]
\[
\vec{o}_n(x) := 
\left ( R_1(x), \ldots , R_n(x)\right ).
\]
For $1 \leq \ell \leq n,\ \vec{b}_n(x)$ is constant on $D_{n,\ell}$ and $\vec{o}_n(x)$  is constant on $E_{n,\ell}$.  Therefore, following
the convention in the final paragraph of (1.1), we denote 
these constant values by $\text{I}\vec{b}_n(\ell ),  \text{I}\vec{o}_n(\ell )$,
respectively.  Of course, the same also applies to each individual $R_i$ and so we
also denote by $\left( \text{I}R_i\right )_n(\ell )$ the constant value of $R_i(x)$ on $E_{n,\ell}$.
As in {[}\ref{DSS}{]},
$\text{I}\vec{b}_n(\ell )$ is the reverse of the binary representation of 
$\ell$:
\[
\ell = \sum_{i=1}^{n(M+1)} 2^{n(M+1)-i}\left ( \text{I}\vec{b}_n(\ell )\right )_i,
\] 
and $\left ( \text{I}\vec{b}_n(\ell )\vert\, \ell < 2^{n(M+1)}\right )$
enumerates $\{ 0, 1\}^{n(M+1)}$ in increasing order with respect to the lexicographic 
ordering.  For $\vec{b} \in \{ 0, 1\}^{n(M+1)}$, we also let:
\[ 
D_{\vec{b}} := \{ x \in (0,1)|\vec{b}_n(x) = \vec{b}\}.  
\]
Letting $\ell$
be such that $\vec{b} = \text{I}\vec{b}_n(\ell )$, we note that $D_{\vec{b}} = D_{n,\ell} = 
\left\{ x \in (0,1)\vert \vec{b}_n(x) = \text{I}\vec{b}_n(\ell )\right\}$.  

Note that $\text{I}\vec{b}_n(\ell )$ does NOT denote the $\ell^{\text{th}}$ component of a vector,
$\text{I}\vec{b}_n$, rather it denotes the vector (a length
$n(M+1)$ bitstring) itself.  We will denote the $i^{\text{th}}$ component
of this vector by $\left ( \text{I}\vec{b}_n(\ell )\right )_i$.  Note that
this is just $\varepsilon_i(x)$ for any $x \in D_{n,\ell}$.  Similar observations
hold with $\vec{o}$ in place of $\vec{b}$ (and the set of outcomes of $O$  replacing $\{ 0, 1\}$).

Let $\mathbf{O_n}$ denote the set of vectors $\left\{\text{I}\vec{o}_n(\ell )\vert 0 \leq \ell < 2^{n(M+1)}\right \}$ and
note that this gives an enumeration without repetitions of $\mathbf{O_n}$  (this is essentially because there is a bijection
between outcomes of $O$ and subsets of $\{ 1, \ldots , M+1\}$).  Thus $\vec{b}_n(\ell ) \mapsto \vec{o}_n(\ell )$
is a bijection between $\{ 0,1\}^{n(M+1)}$ and $\mathbf{O_n}$.  We use $\nu_n$ to denote this bijection. 
 Note that for $\vec{b} \in \{ 0, 1\}^{n(M+1)},\ x \in D_{\vec{b}}$ and $1 \leq i \leq n,\ 
\left ( \nu_n(\vec{b})\right )_i = R_i(x)$.

\subsubsection{Admissible Permutations}
\label{subsubsec:2.1.4}
We are now ready to define this paper's version of the notion of admissible permutation 
(of $\left \{ 0, \ldots, 2^{n(M+1)} - 1\right\}$) and to prove Theorem 1, which is best understood
in the following way.  As already noted, each $E_{n,\ell}$ tells us much more than a value of $S_n$:  it tells
us {\it how this value arises as the sum of values of the} $R_i$ for $1 \leq i \leq n$.  The content of Theorem 1 amounts to unravelling the analogous additional information about how a value of $S^*_n$ arises:  as the sum of values of the members of an i.i.d. family of $n$ random variables, each of which is $n-\text{trim}$, i.e., as the sum of the $ R^*_{n,i}$ for $1 \leq i \leq n$.

\begin{definition}  A permutation, $\pi$, of $\left \{ 0, \ldots, 2^{n(M+1)} - 1\right\}$ is {\it admissible}
iff for all $0 \leq \ell < 2^{n(M+1)}$:

\[
S^*_n(x) = S_n(y)\text{ for all }x \in D_{n,\ell }\text{ and all }y \in E_{n,\pi (\ell )}.
\]
\end{definition}

\subsection{Theorem 1 and Corollary 1}
\label{subsec:2.2}
\begin{theorem}  For each $n$, there is a canonical bijection
between admissible permutations of 
$\left\{ 0, \ldots , 2^{n(M+1)}-1\right \}$ and representations of $S^*_n$ as a sum
$\displaystyle{S^*_n = \sum_{i=1}^nR^*_{n,i}}$,
where $\left ( R^*_{n,i}\vert\, 1 \leq i \leq n\right )$ is an
i.i.d. family of $n-\text{\textnormal{trim}}$ random variables each of which has mean 0, variance 1 and has outcomes of $O$
as values, with equal probability.
\end{theorem}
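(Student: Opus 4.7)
The plan is to establish the bijection by giving explicit constructions in each direction and then checking they are mutual inverses.

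For the forward direction, given an admissible permutation $\pi$, I would define, for each $i$ with $1 \le i \le n$ and each $x \in D_{n,\ell}$,
\[
R^*_{n,i}(x) := \left ( \text{I}R_i\right )_n\!\bigl (\pi (\ell )\bigr ),
\]
that is, $R^*_{n,i}(x)$ equals the constant value of $R_i$ on $E_{n,\pi(\ell)}$. Because the definition depends only on $\ell$, and $\ell$ is determined by $\vec b_n(x) = (\varepsilon_1(x),\ldots ,\varepsilon_{n(M+1)}(x))$, each $R^*_{n,i}$ is $n$-trim. The values of $R^*_{n,i}$ are by construction outcomes of $O$. Admissibility gives $\sum_i R^*_{n,i}(x) = \sum_i R_i(y) = S_n(y) = S^*_n(x)$ for any $y\in E_{n,\pi(\ell)}$. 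For the distributional properties, observe that each $D_{n,\ell}$ has Lebesgue measure $2^{-n(M+1)} = m^{-n}$, and the same holds for each $E_{n,\ell}$; since $\pi$ is a permutation and the map $\ell \mapsto \text{I}\vec o_n(\ell)$ enumerates $\mathbf{O_n}$ without repetition (by (2.1.3)), the joint law of $(R^*_{n,1},\ldots ,R^*_{n,n})$ on $(0,1)$ equals the uniform law on $\mathbf{O_n}$, which is the product of $n$ copies of the uniform law on $\{o_1,\ldots ,o_m\}$. This yields the i.i.d. property, and mean $0$ and variance $1$ follow from the standing assumptions on $O$.

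For the reverse direction, suppose $\left ( R^*_{n,i}\vert 1 \le i \le n\right )$ is given with the listed properties. Since each $R^*_{n,i}$ is $n$-trim, it is constant on each $D_{n,\ell}$, so for every $\ell$ we obtain a vector $\vec o^*_n(\ell) := \left ( (\text{I}R^*_{n,1})_n(\ell),\ldots ,(\text{I}R^*_{n,n})_n(\ell)\right ) \in \{o_1,\ldots ,o_m\}^n$. The crucial observation is that the i.i.d.\ hypothesis with equally likely outcomes forces the law of $(R^*_{n,1},\ldots ,R^*_{n,n})$ to be uniform on the set of all $m^n$ such tuples; since each tuple has probability $m^{-n}$ and each $D_{n,\ell}$ has measure $m^{-n}$, the map $\ell \mapsto \vec o^*_n(\ell)$ must be a bijection onto $\mathbf{O_n}$. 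Composing with the inverse of the bijection $\ell' \mapsto \text{I}\vec o_n(\ell')$ from (2.1.3) yields a permutation $\pi$ of $\{0,\ldots ,2^{n(M+1)}-1\}$ characterized by $\vec o^*_n(\ell) = \text{I}\vec o_n(\pi(\ell))$. For $x \in D_{n,\ell}$ and $y \in E_{n,\pi(\ell)}$ we then have $R^*_{n,i}(x) = R_i(y)$ for each $i$, whence $S^*_n(x) = \sum_i R^*_{n,i}(x) = \sum_i R_i(y) = S_n(y)$, which is exactly admissibility.

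Finally, I would check that these two assignments are inverse: starting from $\pi$, building the $R^*_{n,i}$, and then reading off the permutation from their constant values recovers $\pi$ by construction; conversely, starting from a representation, extracting $\pi$, and then rebuilding the variables returns the original $R^*_{n,i}$ because their values on each $D_{n,\ell}$ were used to define $\pi$. The step I expect to be the main obstacle, and where care is needed, is the injectivity/bijectivity argument in the reverse direction: one must convert the abstract i.i.d.\ hypothesis into the combinatorial statement that $\ell \mapsto \vec o^*_n(\ell)$ exhausts $\mathbf{O_n}$ exactly once. This uses both the equal-measure structure of the $D_{n,\ell}$ and the fact that $|\mathbf{O_n}| = m^n = 2^{n(M+1)}$, so a pigeonhole argument powered by the uniform joint distribution closes the gap.
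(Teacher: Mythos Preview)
Your proposal is correct and follows essentially the same route as the paper: the two constructions you give are exactly the paper's Equations (5) and (6), and your verifications (joint law uniform on $\mathbf{O_n}$ for independence; pigeonhole on the $m^{-n}$ masses for injectivity of $\pi$) are streamlined versions of the paper's separate marginal/joint computation and its proof-by-contradiction that $\pi$ is one-to-one.
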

\begin{proof}
Fix $n > 0$.  We first construct the $R^*_{n,i}$, given $\pi$, and then construct $\pi$ given the  
$R^*_{n,i}$.  We then carry out the necessary verifications in each direction. 

Let $\pi$ be an admissible permutation of
$\left\{ 0, \ldots , 2^{n(M+1)}-1\right\}$.  For
$x \in (0, 1)$, let $\ell$ be such that $x \in D_{n,\ell}$, and let $1 \leq i \leq n$.    
Then:    
\begin{equation}
\text{let } y \text{ be any member of } E_{n,\pi (\ell )} \text{ and define:  }  R^*_{n,i}(x) := R_i(y).
\end{equation}
\noindent

Conversely, given an independent family, $\left ( R^*_{n,i}\vert 1 \leq i \leq n\right )$, such that
$\displaystyle{S^*_n = \sum_{i=1}^nR^*_{n,i}}$, where
each $R^*_{n,i}$  is $n-\text{trim}$, has mean 0, variance 1 and has outcomes of $O$
as values, with equal probability, we obtain $\pi$ as follows.  Given $0 \leq \ell < 2^{n(M+1)}$, let
$x \in D_{n,\ell}$, let $\vec{o} = \left ( R^*_{n,i}(x)\vert 1 \leq i \leq n\right )$ and
define:
\begin{equation}
\pi (\ell )\ =\ \text{that } \ell '\text{ with } 0 \leq \ell ' < 2^{n(M+1)}\ \text{such that } \vec{o} = \text{I}\vec{o}_n\left ( \ell '\right ).
\end{equation}
Clearly these constructions yield a bijection, so we turn to the necessary verifications.

First suppose $\pi$ is admissible and that the 
$R^*_{n,i}$ are defined by Equation (5). 
Clearly these $R^*_{n,i}$ are $n-\text{trim}$, have mean 0 and variance 1.
In order to see that they sum to $S^*_n$, note that:
\[
\text{for all } \ell < 2^{n(M+1)} \text{ and all } x \in D_{n,\ell},\
S^*_n(x) = S_n(y) \text{,  for any } y \in E_{n,\pi (\ell )},
\]
\[
\text{i.e. } S^*_n(x)
= \sum_{i=1}^nR_i(y) \text{, for any such } y,
\text{i.e. } S^*_n(x) = \sum_{i=1}^nR^*_{n,i}(x);\text{ this suffices.}
\]

We next show that for each $1 \leq i \leq n$ and each of the $m$ outcomes, $o$ of $O,\ 
P\left ( R^*_{n,i} = o\right ) = \frac{1}{m} = 2^{(n-1)(M+1)}\times 2^{-n(M+1)}$ by showing
that the event ``$R^*_{n,i} = o$'' is the union of $2^{(n-1)(M+1)}$ dyadic inteverals $D_{n,\ell}$.
For this, note that the event ``$R_i = o$'' \emph{does} have probability $\frac{1}{m}$
and therefore is the union of $2^{(n-1)(M+1)}$ of the sets $E_{n,\ell}$.  It is routine to see
that a set $E_{n,\ell}$ is included in the event ``$R_i = o$'' iff the set 
$\displaystyle{D_{n,\pi^{-1}(\ell )}}$ is included in the event
``$R^*_{n,i} = o$'' and so this suffices.

In order to see that these $R^*_{n,i}$ are independent,
it suffices to show that:
\[
\text{for all } \vec{o} = \left ( o_1, \ldots , o_n\right ) \in \mathbf{O}_n,\
p\left(o_1,\ldots,o_n\right) = 
p_1\left(o_1\right)\cdot\ldots\cdot p_n\left(o_n\right),
\]
where $p$ is the joint pmf of the $R^*_{n,i}$ and each $p_i$
is the pmf of $R^*_{n,i}$ alone.  
We have already seen that $p_1\left(o_1\right)\cdot\ldots\cdot p_n\left(o_n\right) =
2^{-n(M+1)}$, so,
again viewing $\pi$ as a permutation
of $\{ 0, 1\}^{n(M+1)}$, let 
$\vec{b} := \pi^{-1}\circ \left ( \nu_n\right )^{-1}(\vec{o})$ and note that:
\[
P\left(R^*_{n,1}=o_1,\ldots,R^*_{n,n}=o_n\right) = 
\lambda\left(\left\{ x\vert \pi\left ( \vec{b}_n(x)\right ) = 
\nu_n^{-1}(\vec{o}\right\} \right) = 
\lambda\left(D_{\vec{b}}\right) = 2^{-n(M+1)}.  
\]

For the opposite direction, suppose that $\left ( R^*_{n,i}\vert 1 \leq i \leq n\right )$
is given with the stated properties.  Let $\pi$ be defined
by Equation (6). 
We first show that $\pi$ is one-to-one.  For this, let $x\in D_{n,\ell},\ 
\vec{o} = \left ( o_1, \ldots , o_n\right ) =  \left ( R^*_{n,i}(x)\vert 1 \leq i \leq n\right )$ and note that if
$\vec{u} \in \{ 0, 1\}^{n(M+1)}$ is such that 
\[
\text{for } y \in D_{\vec{u}},\ \left ( R^*_{n,i}(y)\vert 1 \leq i \leq n\right ) =
\vec{o} \text{, then } \vec{u} = \text{I}\vec{b}_n(\ell ).  
\]
If this
were to fail we would have that 
\[
P\left(R^*_{n,1}=o_1,\ldots,R^*_{n,n}=o_n\right) \geq\  
\lambda\left(D_{\vec{u}}\right)+
\lambda\left(D_{n,\ell }\right)=2^{-n(M+1)+1}, 
\]
which contradicts
our hypotheses on the $R^*_{n,i}$.  Thus, $\pi$ is one-to-one.  Admissibility
then follows, because now, by hypothesis, if $x \in D_{n,k}$ and $m = \pi (k)$, then:
\[
S^*_n(x) = \sum_{i=1}^nR^*_{n,i}(x) = \sum_{i=1}^no_i \text{, but also 
for any } y \in D_{n,m},\ S_n(y) = \sum_{i=1}^no_i,
\]
\noindent
as required.  
\end{proof}
\begin{corollary}  There is a canonical bijection between
sequences, $\left\{ \pi_n\right\}$,
of admissible permutations of $\left\{ 0, \ldots , 2^{n(M+1)}-1\right \}$ and trim,
strong triangular arrays for 
$\left\{ S^*_n\right\}$.
\qed
\end{corollary}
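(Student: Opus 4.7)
The plan is to obtain Corollary 1 essentially for free by applying Theorem 1 row by row, so the work is mostly bookkeeping rather than new content. I would begin by fixing, for each $n$, the bijection $\Phi_n$ provided by Theorem 1 between admissible permutations $\pi$ of $\{0,\ldots,2^{n(M+1)}-1\}$ and the representations $S^*_n = \sum_{i=1}^n R^*_{n,i}$ where $(R^*_{n,i}\vert 1 \leq i \leq n)$ is an i.i.d. family of $n$-trim random variables having the distributional properties listed there (mean 0, variance 1, outcomes of $O$ equally likely). The candidate canonical bijection sends a sequence $\{\pi_n\}$ of admissible permutations to the triangular array $(R^*_{n,i}\vert n \in \mathbb{Z}^+,\ 1 \leq i \leq n)$ whose $n$-th row is $\Phi_n(\pi_n)$, and in the reverse direction it sends a strong trim triangular array to the sequence $\{\Phi_n^{-1}(n\text{-th row})\}$.

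The forward direction requires checking that the resulting triangular array is indeed \emph{strong} and \emph{trim} in the sense used in the paper. Strength, i.e.\ pointwise equality $S^*_n(x) = \sum_{i=1}^n R^*_{n,i}(x)$ on $(0,1)$ for every $n$, is precisely the conclusion of Theorem 1 applied at each level, and trimness (the $n$-th row depending only on the first $n(M+1)$ bits of $x$) is the $n$-trimness clause of Theorem 1. For the reverse direction, I need to argue that once one extracts the $n$-th row of a strong trim triangular array for $\{S^*_n\}$, that row satisfies all the hypotheses of Theorem 1 at level $n$: this is essentially the definition of a strong trim triangular array in this paper (the rows sum pointwise to $S^*_n$ and are $n$-trim), together with the ambient i.i.d./distributional requirements on triangular array representations.

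That the two maps are mutual inverses then follows immediately from the fact that $\Phi_n$ and $\Phi_n^{-1}$ are inverse at each fixed level $n$, so the composition at the level of sequences is the identity in both directions. The only mild subtlety, which I expect to be the main (though still minor) obstacle, is ensuring that the definition of ``strong trim triangular array for $\{S^*_n\}$'' built into the statement of the Corollary matches, row by row, precisely the list of hypotheses appearing in Theorem 1 (i.i.d., $n$-trim, mean 0, variance 1, outcomes of $O$ equally likely, summing pointwise to $S^*_n$); once this match is made explicit, the Corollary reduces to the observation that a sequence of bijections induces a bijection on sequences, and no further argument is required.
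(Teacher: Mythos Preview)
Your proposal is correct and matches the paper's approach exactly: the paper presents Corollary 1 with a bare \qed immediately after Theorem 1, treating it as the obvious statement that the level-by-level bijections $\Phi_n$ of Theorem 1 assemble into a bijection on sequences. Your bookkeeping is precisely what the \qed suppresses.
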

Given $n$ and $\left ( R^*_{n,i}\vert 1 \leq i \leq n\right )$, Equation (6) 
is best seen as as a finer version of the displayed formula of Definition 1 (the definition of admissible permutation).
Incorporating the additional information in the representations $\left ( R_i\vert 1 \leq i \leq n\right )$
and $\left ( R^*_{n,i}\vert 1 \leq i \leq n\right )$ singles out
a specific admissible permutation, whereas the displayed formula of  Definition 1 defines the set  of all of them.
Equation (5) reverses this, taking as given the canonical representation, 
$\left ( R_i\vert 1 \leq i \leq n\right )$, together with a specific admissible permuation 
and singling out a specific representation,
$\left ( R^*_{n,i}\vert 1 \leq i \leq n\right )$, of $S^*_n$.
\section{Theorems 2 and 3}
We follow the general outline of \S 3 of {[}\ref{DSS}{]}, omitting
certain items, and in particular, those specifically targetted
at \S 4 of {[}\ref{DSS}{]}, as well as Propositions 1 - 3 of that paper. 
As part of our concluding remarks in (3.3), we will discuss in more detail the status of the
omitted items.  

In (3.1.1), we introduce a number of notions related to the correspondence between values of the $\text{MN}^O_n$
and multinomial vectors $\vec{k}$; the latter arise as vectors (of length $m$) of frequencies of the outcomes, $o_s$
of $O$.  We also introduce the functions $\tau^O_1$ and $\tau^O_2$.  This is the substance of Definitions 2 - 5.  The 
role of $\tau^O_1$ is essential and has been mentioned in the Introduction, just prior to the beginning of (1.1).  On
the other hand, despite its formal dependence on $O$, the role of the function $\tau^O_2$ is merely a convenience
for the proof of the important Lemma 2.  This is discussed in somewhat more detail below.

In (3.1.2), Definition 6 introduces the function $\text{SMC}^O$ and the $\gamma^O_{n,t}$, which are this paper's
analogues of the function SBC of {[}\ref{DSS}{]} and the binomial coefficients, respectively.  In Definition 7, we introduce this paper's versions of
the functions $\text{Step}_n,\ \text{Weight}_n$.
Incorporating Remark 1 paves the way for introducing their ``integer versions'' $\text{IStep}_n,\ \text{IWeight}_n,\ \text{I}S\text{ and I}S^*$, respectively, following the paradigm laid out in the penultimate paragraph of (1.1).  We also
introduce the functions $\text{IStep},\ \text{IWeight}$ which are the natural encodings
(as functions of two variables) of the family of the 
$\text{IStep}_n,\ \text{IWeight}_n$, respectively.   Some obvious properties of these ``integer versions'' are
stated in Remark 2.

In Definition 8, we introduce the sets $A_{n,t}$ and $B_{n,t}$ (this paper's version of the sets $A_{n,i}$ and $B_{n,i}$
of {[}\ref{DSS}{]}), their ``integer versions'' $\text{I}A_{n,t}$ and
$\text{I}B_{n,t}$, the three-place relations  $\text{RI}A$ and
$\text{RI}B$, which encode the families $\text{I}A_{n,i}$
and $\text{I}B_{n,i}$ respectively, and, finallly, their cardinality functions, $\alpha (n,t,\xi )$ and $\beta (n,t,\xi )$,
respectively, and their enumerating functions $a_{n,t,s}$ and $b_{n,t,s}$, respectively.  The cardinality and
enumerating functions play an important role in the proofs of Lemma 2 and Theorem 2.  The notion of ``tameness''
which was introducted in {[}\ref{DSS}{]}, remains in the background here,  but plays an important role implicitly.  This will be
discussed in connection with the summary of (3.2).

The material of Definition 8 also sets the stage for the concluding items, Lemma 1 and Corollary 2, of (3.1.2).  In part (a)
of Lemma 1, we show that, in analogy with the situation of {[}\ref{DSS}{]}, the $\gamma^O_{n,t}$ are the common cardinalities,
$\alpha_{n,t}$ of $\text{I}A_{n,t}$ and $\beta_{n,t}$ of $\text{I}B_{n,t}$.  In part (b), we give an alternate characterization
of admissibility which enables part (c), where we finally tie up some loose ends from \S 2 by proving the existence of
many admissible permutations of $\left \{ 0, \ldots , 2^{n(M+1)}-1\right\}$.  In Corollary 2, we invoke Theorem 1 and Corollary 1 to extend this to the existence of representations (with the needed properties) of each $S^*_n$ and to strong, trim triangular array representations of the sequence $\left \{ S^*_n\right\}$.  We conclude (3.1) by developing, in (3.1.3), some notation related to the $\overline{J}_i$.
This will be used in the proof of Lemma 2.

We begin to address complexity issues in earnest in (3.2).  In Definition 9 ((3.2.1)), we introduce the sequence $\left\{ F_n\right\}$ of admissible permutations and note, in Remark 3, its connection to the view of admissibility developed in the final paragraph of the proof of Lemma 1.  This Remark justifies the somewhat imprecise claim that each $F_n$ is the {\it simplest } admissible permutation of 
$\left\{ 0, \ldots , 2^{n(M+1)}-1\right\}$.  

In (3.2.2) we prove a sequence of results that culminates in the proof of Theorem 2:  that
the two-place function $F$ which naturally encodes the sequence $\left\{ F_n\right\}$ is P-TIME relative to $\tau^O_1$.  
Before laying out in detail the route to this result, it will be helpful to comment briefly on the role of ``tameness'' and some of
the related results of {[}\ref{DSS}{]}.  The context is that we are dealing with a $d+1-\text{place relation, }R$ on $\mathbb{N}$ and
$R$ has the property that for all $\vec{u} = \left ( u_1, \ldots , u_d\right ) \in \mathbb{N}^d,$
\[
R[\vec{u}] = \left\{ w\vert R\left ( u_1, \ldots , u_d, w\right)\right\}\text{ {\it is finite}.}
\]
 The {\it cardinality function } for $R$ was then defined
to be the function, which, to $d+1-\text{tuples, }(\vec{u},\xi ) \in \mathbb{N}^{d+1}$, assigns the cardinality of 
$R[\vec{u}] \cap \{ 1, \ldots , \xi\}$.
Then, in {[}\ref{DSS}{]}, such a relation, $R$, was defined to be ``tame'' iff its cardinality function is P-TIME.  The important contribution
of tameness was provided by item (6) of Remark 3, and by Lemma 2 of {[}\ref{DSS}{]}, and both of these required a mild
additional assumption:  that it is P-TIME decidable whether, given $\vec{u} \in \mathbb{N}^d$, there is some $w$ such
that $R(\vec{u}, w)$.  Given this additional assumption on $R$, item (6) of Remark 3 of {[}\ref{DSS}{]} notes that
if $R$ is tame, then $R$ itself is P-TIME decidable.  This is simply because, for $\vec{u} \in \mathbb{N}^d$ and $w \in \mathbb{Z}^+$,  letting cd denote the cardinality function for $R$, $R(\vec{u},w)$ holds iff $\text{cd}(\vec{u},w) = \text{cd}(\vec{u},w-1) + 1$.

The content of Lemma 2 of {[}\ref{DSS}{]} involves the ``enumerating function'' for a relation $R$, as above.  This is the function, $E$, whose
domain consists of $d+1-\text{tuples. }(\vec{u},s)$ such that $1 \leq s \leq |R[\vec{u}|$, and to such a $d+1-\text{tuple}$
assigns the $s^{\text{th}}$ member of $R[\vec{u}]$ (in the increasing enumeration of $R[\vec{u}]$).  The result is that if
$R$ is tame, then $E$ is P-TIME. 

Returning to the context of this paper, of course the relations we have in mind are $\text{RI}A$ and $\text{RI}B$.  We
relativize all of the notions in the last two paragraphs to the functoin $\tau^O_1$.  Then, relative to $\tau^O_1$, both of
these relations satisfy the mild additional hypothesis:  their domains are P-TIME decidable relative to $\tau^O_1$.  
The proofs of item (6) of Remark 3, and of Lemma 2, both of {[}\ref{DSS}{]} then relativize in a completely straightforward
way to $\tau^O_1$.

We can now
continue with our account of the sequence of results in (3.2.2).  We
begin with Proposition 1 which establishes that the following functions are P-TIME relative to $\tau^O_1$:   $\text{SMC}^O$, the $\gamma^O_{n,t}$, the function IStep and the function $\alpha$ of Equation (7c) (Definition 8).  As already noted, above, $\alpha$ is the ``cardinality function'' for the relation $\text{RI}A$.   Therefore, in virtue of the preceding discussion, it follows (and this is Remark 4)
that  $\text{RI}A$ is {\it tame relative to }$\tau^O_1$, and so it is also P-TIME decidable relative to $\tau^O_1$.
This is part of Corollary 3 which also notes that for the same reason and since the function $a_{n,t,s}$ of Equation of (7di) (Definition 8) is the enumerating function for $\text{RI}A$, it follows that, as a function of $(n,t,s),\ a_{n,t,s}$ is P-TIME relative to $\tau^O_1$.

In Proposition 2 we show that $\tau^O_2$ is outright P-TIME, despite its formal dependence on $O$.  The main work is done in Lemma 2, where we show that the function $\beta$ of Equation (7c) is also P-TIME relative to $\tau^O_1$.  It follows (as above for $\text{RI}A$ and the function $a_{n,t,s}$) that $\text{RI}B$ is tame relative to $\tau^O_1$ and therefore P-TIME decidable relative to $\tau^O_1$,
and that the function $b_{n,t,s}$ of Equation (7dii) (Definition 8) is also P-TIME relative to $\tau^O_2$ (since it is the enumerating
function of $\text{RI}B$).  This is Corollary 4, the analogue ``on the $B-\text{side}$'' of Corollary 3.

This completes the preparation for the proof of Theorem 2.  We conclude (3.2) with the obvious translation
of the result of Theorem 2 into the language of strong trim triangular representations of the sequence $\left\{ S^*_n\right\}$.  This is Theorem 3 and it follows immediately from Theorem 2 and Corollary 1.  We conclude the paper in (3.3) with a retrospective comparison with {[}\ref{DSS}{]}, focusing on the status of the items of {[}\ref{DSS}{]} which we have omitted, and the underlying difficulties having to do with the fact that there is no longer a transparent connection between the values of $S_n$ and the number of 1's in the binary representation of $\ell < 2^{n(M+1)}$.
\subsection{Preliminaries for Theorems 2 and 3}
\label{subsec:3.1}
Our first task will be to take a closer look at the multinomial distributions $\text{MN}^O_n$
\subsubsection{Multinomial vectors, values and frequencies:  the functions $\tau^O_1$ and $\tau^O_2$}
\label{subsubsec:3.1.1}
\begin{definition}  We denote by $\mathbf{K}_n$ the set of level $n$ multinomial vectors,
$\vec{k} = \left ( k_1, \ldots , k_m\right )$, where each $k_i$ is a non-negative integer and $\sum_i k_i = n$.
As usual, $\binom{n}{\vec{k}}$ denotes the multinomial coefficient corresponding to $n$ and $\vec{k}$, i.e.
$\binom{n}{\vec{k}} = \frac{n!}{k_1!\cdots k_m!}$.
We denote by $C_n$ the cumulative
distribution function of $\text{MN}^O_n$.
\end{definition}

As already noted in (1.1), the values of $\text{MN}^O_n$ are the inner products, 
$\vec{k}\cdot\vec{o} = k_1o_1 + \ldots + k_mo_m$, of the $\vec{k} \in \mathbf{K}_n$ with the fixed vector,
$\left ( o_1, \ldots , o_m\right )$, of outcomes of $O$.    This leads naturally to the following
equivalence relations, $\equiv_n$ on the  $\mathbf{K}_n$.

\begin{definition}  For $\vec{k},\ \vec{\kappa} \in \mathbf{K}_n$, we set $\vec{k}
\equiv_n \vec{\kappa}$ iff $\vec{k}\cdot\vec{o} = \vec{\kappa}\cdot\vec{o}$.
\end{definition}

Thus, it is the equivalence classes of $\equiv_n$ that correspond bijectively to the values of 
$\text{MN}^O_n$.  The structure of the $\equiv_n$ depends on $O$, and therefore, ultimately,
on $Q$ (recall (1.2)) and thus we cannot control this structure aside from the minimal requirements
imposed by the hypotheses that $Q$ has been normalized.  In particular, we cannot control the
number of values/equivalence classes beyond the obvious bound of $\binom{n+m-1}{m-1}$  
mentioned in (1.1) (just prior to Equation (1)).  

\begin{definition}  We let $\left ( v_n^t\vert 0 \leq t \leq T_n\right )$ be the increasing enumeration of
the values of $\text{MN}^O_n$.  For $0 \leq t \leq T_n$, we let $\mathbf{K}_{n,t}$ be the equivalence class of  $\equiv_n$
corresponding to the value $v^t_n$, i.e., $\mathbf{K}_{n,t} := \left\{ \vec{k} \in \mathbf{K}_n\vert 
\vec{k}\cdot\vec{o} = v^t_n\right \}$ and we let $\mathbf{\overline{K}}_{n,t} := \left\{ \vec{k} \in \mathbf{K}_n\vert 
\vec{k}\cdot\vec{o} < v^t_n\right \}$.  

We let $\tau^O_1$ be that function whose domain consists of all $m+2-\text{tuples},\ \left ( n,k_1,\ldots , k_m,t\right)$
such that $ n > 0,\ 0 \leq t \leq T_n,\ \left ( k_1,\ldots , k_m\right ) \in \mathbf{K}_n$, and such that:
\[
\tau^O_1 \left ( n, k_1, \ldots , k_m, t \right ) = 1\text{ iff }\left ( k_1, \ldots , k_m \right ) \in \mathbf{K}_{n,t}.
\]
We will also denote elements of the domain of $\tau^O_1$ by $( n,\vec{k},t)$.
\end{definition}
The $t-\text{indices}$ of these
values will serve as the analogues of the notions of Step and Weight from {[}\ref{DSS}{]}.
The analogues in {[}\ref{DSS}{]} of the $v_n^t$ are the $-n + 2i$, for $0 \leq i \leq n$, while
the analogue of $\tau^O_1$ in {[}\ref{DSS}{]} is simply the (obviously P-TIME) function which (for $n, k$, and $t$, with $0 \leq i \leq n$ and $-n \leq t \leq n$ such that $n \equiv t (\text{ mod } 2)$) takes on value 1 iff $t = -n+2i$.  Thus, it checks whether or not $t$ is the
value of $S_n$ that corresponds to the Hamming weight, $k$.  Our $\tau^O_1$ furnishes the analogous check.  
As usual, in our setting, the correspondence between patterns of binary digits and values of $S_n$ is no longer at the surface: it depends on $O$ in an essential way.
\begin{definition}
For $x \in (0,1),\ 0 \leq b \leq n$ and $1 \leq s \leq m$, let $k_{n,s}(x,b) := $ the
frequency of $o_s$ in $\left ( \left (\vec{o}_n(x)\right)_i\vert i > b\right )$ and let
$\vec{k}_n(x, b) := 
\left ( k_{n,1}(x,b), \ldots , k_{n,m}(x,b)\right )$.  Since these are
constant on the $E_{n, \ell}$, we  have also defined the integer versions, $\text{I}k_{n,s}(\ell ,b)$
and $\text{I}\vec{k}_n(\ell ,b)$, for $0 \leq \ell < 2^{n(M+1)}$.  We let $\tau^O_2$ be the function defined on
four-tuples $(n,s,\ell , b)$ with $0 \leq b \leq n,\ 1 \leq s \leq m$ and $0 \leq \ell < 2^{n(M+1)}$, 
and such that for such $(n,s,\ell ,b)$:
\[
\tau^O_2(n,s,\ell ,b) := \text{I}k_{n,s}(\ell ,b).
\]
\end{definition}
\subsubsection{Multinomial analogues of \textnormal{Step, Weight} and related notions}
\label{subsubsec:3.1.2}
We now develop the analogues of the notions introduced in (3.1) of {[}\ref{DSS}{]}.
\begin{definition}   For $n >  0$ and $0 \leq t \leq T_n+1$, set 
\[
\text{SMC}^O(n,t) := \sum_{\vec{k} \in \mathbf{\overline{K}}_{n,t}}\ \binom{n}{\vec{k}}\text{ and } \gamma^O_{n,t} :=
\text{SMC}^O(n,t+1) - \text{SMC}^O(n,t).
\]
\end{definition}
\noindent
Thus, $\displaystyle{\gamma^O_{n,t} = \sum_{\vec{k} \in \mathbf{K}_{n,t}}\binom{n}{\vec{k}}}$.  As we shall soon see (and as may already be obvious), $\gamma^O_{n,t}$ is the combinatorial counterpart of the binomial coefficient $\binom{n}{i}$ in {[}\ref{DSS}{]}, while $\text{SMC}^O$ is the combinatorial counterpart of the function $\text{SBC}$ of {[}\ref{DSS}{]}.  This said, even these ``aggregated'' functions 
fall short of providing all of the information needed for the proof of the crucial Lemma 2, below, in (3.2).  

For $\vec{k} \in
\mathbf{K}_n$ and $0 \leq t \leq T_n,\ \tau^O_1$ tells whether or not $\vec{k} \in \mathbf{K}_{n,t}$.   Item (2) of Remark 1, and the related Proposition 1, below, show that the information encoded by $\tau^O_1$ allows us to recover the $\gamma^O_{n,t}$ and $\text{SMC}^O$ in a simple fashion.  The function $\tau^O_2$ provides information about the frequencies of the outcomes $o_s$ that is ``stratified'' by $b$, i.e., about the frequencies among the $\left ( \text{I}\vec{o}_n(\ell )\right )_i$ for $i > b$.  

Both $\tau^O_1$ and $\tau^O_2$ play prominent roles in the proof of Lemma 2, but the role of $\tau^O_2$ will turn
out to be a convenience, since, despite its formal dependence on $O$, we will nevertheless be able to show, in Proposition 2,
of (3.2), that it is outright P-TIME.   The finer information encoded in $\tau^O_1$ is indispensable, however, and thus, in (3.2),
our upper complexity bounds will be obtained relative to the function $\tau^O_1$.  
\begin{definition}
For $n > 0$, and $x \in (0,1)$,
$\text{Step}_n(x)$ (respectively $\text{Weight}_n(x)$)  is the unique $t$ with $0 \leq t \leq T_n$
such that $S^*_n(x) = v_n^t$ (respectively $S_n(x) = v_n^t$).
\end{definition}
\begin{remark}  For $x \in (0,1)$. 
and $n > 0$, the following observations are obvious:
\begin{enumerate}

\item  $\text{SMC}^O(n,t) = 
2^{n(M+1)}C_n\left ( v^t_n\right ),$

\item  $\displaystyle{\gamma^O_{n,t} = 
\underset{\vec{k} \in \mathbf{K}_n}\sum\tau^O_1(n,\vec{k},t)}$ so
$\displaystyle{\text{SMC}^O(n,t) =
\underset{t' < t}\sum\underset{\vec{k} \in \mathbf{K}_n}\sum\tau^O_1(n,\vec{k},t')}$,

\item  $\text{Step}_n(x)$ is the unique $t$ such that $\text{SMC}^O(n,t) \leq x2^{n(M+1)} < 
\text{SMC}^O(n,t+1)$,

\item  $\text{Step}_n(x),\ \text{Weight}_n(x)$ depend at most on $\vec{b}_n(x),\
\vec{o}_n(x)$, respectively.\qed
\end{enumerate}
\end{remark}
In view of the penultimate paragraph of (1.1) and 3. of Remark 1, we can introduce the notions $\text{IStep}_n(\ell ),\
\text{IWeight}_n(\ell )$ for $0 \leq \ell < 2^{n(M+1)}$.  We already knew that $S_n$ and $S^*_n$ also depend at most
on $\vec{b}_n(x), \vec{o}_n(x)$, respectively, and the $\text{I}S^*_n(\ell ),\ \text{I}S_n(\ell )$ for
$0 \leq \ell < 2^{n(M+1)}$ were already introduced at the end of (1.1).  Via the usual identification, we have also 
introduced the notions  
$\text{I}S^*_n(\vec{b}),\ \text{I}S_n(\vec{o})$ for
$\vec{b} \in \{ 0, 1\}^{n(M+1)}$ and $\vec{o} \in \mathbf{O}_n$.  
Unlike the situation in {[}\ref{DSS}{]}, here, due to the role of $O$, $\text{IWeight}_n(\ell )$ will typically {\it not}
be independent of $n$ and purely intrinsic to $\ell$; there is greater symmetry between Step and Weight
in the present context.
{\it In what follows, we shall use the notations }$\text{IStep}(n,\ell ),\
\text{IWeight}(n,\ell )$
rather than $\text{IStep}_n(\ell ),\ \text{IWeight}_n(\ell )$, respectively.  
The following is then also obvious. 
\begin{remark}  For $n > 0$ and $k < 2^n$:
\begin{enumerate}
\item  $\text{IStep}(n,0) = 0$ and for $0 < \ell < 2^{n(M+1)} ,\ 
\text{IStep}(n,\ell )$ is the least positive $t \leq T_n$ such that 
$\displaystyle{\ell < \text{SMC}^O(n,t+1)}$,

\item  For all $0 \leq t \leq T_n,\
\text{SMC}^O(n,t)$ is the unique (and so least) $\ell$ such that $\text{IStep}(n,\ell ) = t.$\qed
\end{enumerate}
\end{remark} 

\begin{definition}  For   
$0 \leq t \leq T_n$, we define $A_{n,t},\ B_{n,t}$ by:
\begin{equation}
A_{n,t} :=
\{ x \in (0,1)\vert \text{Step}_n(x)\ = t\}\text { and }  
B_{n,t} := \{ x \in (0,1)\vert \text{Weight}_n(x)\ = t\}.
\end{equation}
In view of Remark 2, for fixed $n > 0$, each $A_{n,t}$
is a union of $D_{n,\ell }$'s and each $B_{n,t}$ is the 
union of $E_{n,\ell }$'s, and therefore,
in view of the last paragraph of (1.1), we have introduced the notions $\text{I}A_{n,t},\ \text{I}B_{n,t}$ 
for the corresponding subsets of $\left\{ 0, \ldots , 2^{n(M+1)}-1\right\}$\ 
(or of $\{ 0, 1\}^{n(M+1)}$, via the usual identification) :\
\begin{equation}\tag{7a}
\text{I}A_{n,t} := \left\{ \ell < 2^{n(M+1)}\vert D_{n,\ell } \subseteq A_{n,t}\right\}\text{ and }
\text{I}B_{n,t} := \left\{ \ell < 2^{n(M+1)}\vert E_{n,\ell } \subseteq B_{n,t}\right\}.
\end{equation}
We also let:
\begin{equation}\tag{7b}
\alpha_{n,t} :=  \left\vert\text{I}A_{n,t}\right\vert\text{ and }
\beta_{n,t} :=  \left\vert\text{I}B_{n,t}\right\vert,
\end{equation}
and for positive integers, $\xi < 2^{n(M+1)}$,
we let 
\begin{equation}\tag{7c}
\alpha (n,t,\xi) =  \left\vert\text{I}A_{n,t} \cap \{1,\ldots , \xi\}\right\vert,\text{ and }
\beta (n,t,\xi) =  \left\vert\text{I}B_{n,t} \cap \{1,\ldots , \xi\}  \right\vert. 
\end{equation}
For $n > 0$ and $1 \leq t \leq T_n$ we let:
\begin{equation}\tag{7di}
\left ( a_{n,t,s}\vert 1 \leq s \leq \alpha_{n,t}\right )\text{  be the increasing enumeration of }
\text{I}A_{n,t},
\end{equation}
\begin{equation}\tag{7dii}
\left ( b_{n,t,s}\vert 1 \leq s \leq \beta_{n,t}\right )\text{  be the increasing enumeration of }
\text{I}B_{n,t}.
\end{equation}
Finally, we let:
\begin{equation}\tag{7e}
\text{RI}A(n,t,\ell )\text{ iff } \ell \in \text{I}A_{n,t}\text{ and }
\text{RI}B(n,t,\ell )\text{ iff } \ell \in \text{I}B_{n,t}.
\end{equation}
\end{definition}

In the terminology of {[}\ref{DSS}{]}, the functions of Equation (7c) are the cardinality functions for the relations of Equation (7e),
and the functions of Equations (7di) and (7dii) are the enumerating functions for these relations.

\begin{lemma}  Let $n > 0$.  Then:
\begin{enumerate}
\item  for all $0 \leq t \leq T_n,\ \alpha_{n,t} = \gamma^O_{n,t} = \beta_{n,t}$.
\item  If $\pi$ is a permutation of 
$\left\{ 0, \ldots , 2^{n(M+1)}-1\right\}$,
the admissibility of $\pi$  is equivalent to each of the 
following conditions:
\begin{enumerate} 
\item  for all $\ell < 2^{n(M+1)},\
\text{\textnormal{IWeight}}(n,\pi (\ell )) = \text{\textnormal{IStep}}(n, \ell )$,
\item  for all $0 \leq t \leq T_n,\ \pi\left [ \text{\textnormal{I}}A_{n,t}\right ] =
\text{\textnormal{I}}B_{n,t}$,
\item  $\text{\textnormal{I}}S^*_n = \text{\textnormal{I}}S_n \circ \pi$.
\end{enumerate}  
\item  There are $\displaystyle{\prod_{i=0}^{T_n} \left (\gamma^O_{n,t}!\right )}$
admissible permutations of $\left\{ 0, \ldots , 2^{n(M+1)}-1\right \}.$ 
\end{enumerate}
\end{lemma}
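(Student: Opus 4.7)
The plan is to tackle the three parts in order, with parts (2) and (3) essentially reducing to part (1) plus unpacking of definitions. I do not anticipate a serious obstacle; the main work is bookkeeping, with the only substantive ingredient being the distributional identity between $S_n$ and $S^*_n$ used in part (1).

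For part (1), I would exploit two facts: first, that for $\vec{k} \in \mathbf{K}_n$ the probability that the frequency vector of the sequence $\vec{o}_n$ equals $\vec{k}$ is exactly $\binom{n}{\vec{k}} \cdot 2^{-n(M+1)}$, since the $R_i$ are i.i.d.\ with the $m = 2^{M+1}$ outcomes equiprobable; second, that $S^*_n$ is equal in distribution to $S_n$ by the defining property of quantiles (explicitly noted in (1.1)). Summing over $\vec{k} \in \mathbf{K}_{n,t}$ then gives $P(S_n = v^t_n) = \gamma^O_{n,t} \cdot 2^{-n(M+1)} = P(S^*_n = v^t_n)$. Since each $E_{n,\ell}$ and each $D_{n,\ell}$ has measure $2^{-n(M+1)}$, and since $\bigcup_{\ell \in \text{I}B_{n,t}} E_{n,\ell} = \{S_n = v^t_n\}$ while $\bigcup_{\ell \in \text{I}A_{n,t}} D_{n,\ell} = \{S^*_n = v^t_n\}$, multiplying these probabilities by $2^{n(M+1)}$ yields $\beta_{n,t} = \gamma^O_{n,t} = \alpha_{n,t}$.

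For part (2), I plan to show admissibility $\Leftrightarrow$ (c) $\Leftrightarrow$ (a) $\Leftrightarrow$ (b). The equivalence admissibility $\Leftrightarrow$ (c) is immediate from Definition~1: the displayed condition ``$S^*_n(x) = S_n(y)$ for all $x \in D_{n,\ell}$ and all $y \in E_{n,\pi(\ell)}$'' reads, in integer form, as $\text{I}S^*_n(\ell) = \text{I}S_n(\pi(\ell))$. For (c) $\Leftrightarrow$ (a), use that the map $t \mapsto v^t_n$ of Definition~4 is a bijection from $\{0,\ldots,T_n\}$ to the set of values of $\text{MN}^O_n$, so $\text{I}S^*_n(\ell) = \text{I}S_n(\pi(\ell))$ holds iff the common value is indexed by the same $t$ on each side, which is (a). For (a) $\Leftrightarrow$ (b), observe that by Definition~8 the $\text{I}A_{n,t}$ (resp.\ $\text{I}B_{n,t}$) are the level sets of $\text{IStep}(n,\cdot)$ (resp.\ $\text{IWeight}(n,\cdot)$); since these partition $\{0,\ldots,2^{n(M+1)}-1\}$ as $t$ ranges over $\{0,\ldots,T_n\}$, the condition $\pi[\text{I}A_{n,t}] = \text{I}B_{n,t}$ for every $t$ is precisely the statement that $\pi$ preserves these level labels, i.e.\ (a).

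For part (3), I would work from characterization (b): an admissible permutation corresponds exactly to a choice, for each $t \in \{0,\ldots,T_n\}$, of a bijection $\text{I}A_{n,t} \to \text{I}B_{n,t}$, and conversely any such family of bijections glues to an admissible permutation because the two partitions exhaust $\{0,\ldots,2^{n(M+1)}-1\}$. By part (1), each such matching is between two sets of common size $\gamma^O_{n,t}$, contributing $\gamma^O_{n,t}!$ choices. The product rule over $t$ then gives the stated count.
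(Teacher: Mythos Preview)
Your proposal is correct and follows essentially the same approach as the paper's own proof: part (1) via the distributional identity $S_n \overset{d}{=} S^*_n$ and the measure-$2^{-n(M+1)}$ blocks, part (2) by unwinding Definition~1 through the bijection $t \mapsto v^t_n$ and the level-set partitions, and part (3) by decomposing an admissible $\pi$ into independent bijections $\text{I}A_{n,t} \to \text{I}B_{n,t}$ and counting. The only cosmetic difference is that the paper encodes each such bijection as a permutation $\overline{\pi}_{n,t}$ of $\{1,\ldots,\gamma^O_{n,t}\}$ via the increasing enumerations $a_{n,t,s}, b_{n,t,s}$, whereas you count the bijections directly; the content is identical.
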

\begin{proof}  Since $\alpha_{n,t} = 2^{n(M+1)}P\left ( \text{Step}_n(x) = v^t_n\right )$ and 
$\beta_{n,t} = 2^{n(M+1)}P\left ( \text{Weight}_n(x) = v^t_n\right )$,\\ 1. follows.   For 2., it is clear that (c) is equivalent to (a), and from (1) it then follows that (b) is also equivalent to (a), so we argue that the admissibility of $\pi$ is equivalent to (a).  
Let $\pi$
be any permutation of $\left\{ 0, \ldots , 2^n - 1\right\}$, let $\ell < 2^{n(M+1)}$ and 
let $x \in D_{n,\ell },\ y \in E_{n, \pi (\ell )}$.  Let $s = \text{IStep}(n,\ell )$ and let $t = \text{IWeight}(n, \pi (\ell ))$.
Then $s = t$ iff $v^s_n = v^t_n$ iff $S^*_n(x) = S_n(y)$ and (a) is equivalent to the admissibility of $\pi$.

For 3., note that an admissible permutation $\pi$ 
decomposes into the system of its restrictions to the $\text{I}A_{n,t}$.
Complete information 
about $\pi\upharpoonright \text{I}A_{n,t}$ is encoded by
the permutation, $\overline{\pi}_{n,t}$ of
$\left\{ 1, \ldots , \gamma^O_{n,t}\right\}$ defined by:
\begin{equation}
\text{if } 1 \leq s \leq \gamma^O_{n,t},\text{ then }
\pi \left ( a_{n,t,s}\right ) = b_{n,t,\overline{\pi}_{n,t}(s)}.
\end{equation}
Further, the $\overline{\pi}_{n,t}$
are arbitrary in the sense that if, for $0 \leq t \leq T_n,\
\phi_{n,t}$ is {\it any } permutation of
$\left\{ 1, \ldots , \gamma^O_{n,t}\right\}$, then for each $n$
there is a (unique) admissible permutation $\pi_n$
of $\left\{ 0, \ldots , 2^{n(M+1)} - 1\right\}$ such
that for each $0 \leq t \leq T_n,\ \overline{\pi}_{n,t} = \phi_{n,t}$.  Finally,
for fixed $n$, the product in 3. counts the number of such
systems $\left ( \phi_{n,t}\vert 0 \leq t \leq T_n\right )$,
and so 3. follows.
\end{proof}

The next Corollary is an immediate consequence of Theorem 1, Corollary 1 and item
3. of Lemma 1; it gives the existence of strong trim triangular arrays for $\left\{ S^*_n\right\}$.
\begin{corollary}  For each $n$,
there are 
$\displaystyle{\prod_{i=0}^{T_n} \left ( \gamma^O_{n,t}!\right )\text{ representations of } S^*_n\text{ as a sum, }
S^*_n =  \sum_{i=1}^nR^*_{n,i}}$,
where $\left ( R^*_{n,i}\vert\, 1 \leq i \leq n\right )$ is an
i.i.d. family of random variables each of which 
has outcomes of $O$
as values, with equal probability,
has mean 0, variance 1 and depends 
only on $\vec{b}_n.$   Therefore, there exist (continuum many)
strong trim triangular array representations of the sequence
$\left\{ S^*_n\right\}$.
\qed
\end{corollary}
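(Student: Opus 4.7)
The plan is a straightforward assembly of three previously established results, as the authors indicate. First, I would invoke Theorem 1 at each fixed $n$: it provides a bijection between admissible permutations of $\left\{ 0, \ldots, 2^{n(M+1)}-1\right\}$ and representations $S^*_n = \sum_{i=1}^n R^*_{n,i}$ of exactly the type described in the statement, once I observe that ``$n$-trim'' (Theorem 1) is synonymous with ``depends only on $\vec{b}_n$'' by the definition in (1.1). Item 3 of Lemma 1 then evaluates the total count of admissible permutations of $\left\{ 0, \ldots, 2^{n(M+1)}-1\right\}$ as $\prod_{t=0}^{T_n}(\gamma^O_{n,t}!)$. Transporting this count across the Theorem 1 bijection yields the first assertion of the Corollary.

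For the second assertion, I would apply Corollary 1, which furnishes a bijection between sequences $\{\pi_n\}$ of admissible permutations and strong trim triangular array representations of $\{S^*_n\}$. It therefore suffices to argue that the set of such sequences has cardinality $2^{\aleph_0}$. The upper bound $2^{\aleph_0}$ is automatic, since each $\pi_n$ is a finite object and the set of sequences injects into $(\mathbb{Z}^+)^{\mathbb{Z}^+}$.

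The lower bound requires producing at least two admissible permutations for infinitely many $n$, which is equivalent to finding, for infinitely many $n$, some $t$ with $\gamma^O_{n,t} \geq 2$. This is a simple pigeonhole observation: by (1) of Lemma 1, $\sum_{t=0}^{T_n}\gamma^O_{n,t} = \sum_t \alpha_{n,t} = 2^{n(M+1)}$, while the number of summands $T_n + 1$ is at most $\binom{n+m-1}{m-1}$ (the obvious bound on $|\mathbf{K}_n|$ from (1.1)), which is polynomial in $n$. Since $2^{n(M+1)}$ grows exponentially, for all sufficiently large $n$ some $\gamma^O_{n,t}$ is at least $2$ (indeed, grows without bound), yielding at least two admissible permutations at that level. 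Making an independent binary choice at each such $n$ produces $2^{\aleph_0}$ distinct sequences $\{\pi_n\}$, and Corollary 1 transports these to continuum many strong trim triangular array representations of $\{S^*_n\}$.

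I do not expect any genuine obstacle: the entire statement is, as noted, an immediate composition of Theorem 1, Lemma 1(3), and Corollary 1. The only step requiring the slightest thought beyond citation is the cardinality justification for ``continuum many,'' which reduces to the elementary pigeonhole estimate above.
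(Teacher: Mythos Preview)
Your proposal is correct and follows exactly the route the paper takes: the Corollary is stated with a \qed\ and is explicitly described as an immediate consequence of Theorem~1, Corollary~1, and item~3 of Lemma~1. Your additional pigeonhole justification for the ``continuum many'' claim is sound and goes slightly beyond what the paper spells out, since the paper leaves that cardinality assertion implicit.
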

\subsubsection{A Closer Look at the $\overline{J}_i$}
\label{subsubsec:3.1.3}

We develop here some additional properties of the the $\overline{J}_i$
needed for the proof of Theorem 2, and especially of the crucial Lemma 2,
both in the next subsection.  It is there that we will see the point of the particular
choice of the triangular array of positive integers presented in (1.1) and its apparently peculiar
``repetition'' ($M+1$ times) of each row length.

With $\binom{1}{2} = 0$, as usual, it is easy to see that the integer entries in the $b^{\text{th}}$ block of rows are
precisely those $\eta$ satifsying $\binom{b}{2}(M+1) < \eta \leq \binom{b+1}{2}(M+1)$, and therefore, that the integer
entries in the $r^{\text{th}}$ row of the $b^{\text{th}}$ block are precisely those $\eta$ satisfying 
$\left ( \binom{b}{2} + (r-1)\right )(M+1) < \eta \leq \left ( \binom{b}{2} + r\right )(M+1)$.  For $1 \leq i \leq b,\ 
1 \leq r \leq M+1$, we let $\eta_{b,r,i}$ be the entry in row $r$ and column $i$ of block $b$.  It is then
immediate that $\eta_{b,r,i} =  \left ( \binom{b}{2} + (r-1)\right )(M+1) + i$.

For fixed $b$ and fixed $1 \leq i \leq b$, the entries in the $i^{\text{th}}$ column of the $b^{\text{th}}$
block are precisely the $\eta_{b,r,i}$, with $1 \leq r \leq M+1$, i.e., the $\left ( \binom{b}{2} + (r-1)\right )(M+1) + i$.
These are threfore exactly the members of the intersection of $J_i$ with the $b^{\text{th}}$ block of rows.
Finally, we have that the elements of the $\overline{J}_i$ are the entries in the $i^{\text{th}}$ (and last)
column in the $i^{\text{th}}$ block of rows; i.e., 
$\overline{J}_i$ is the intersection of the $i^{\text{th}}$ column with the $i^{\text{th}}$
block of rows, and that for $1 \leq r \leq M+1,\ j_{i,r} = \eta_{i,r,i} = \left ( \binom{b}{2} + (r-1)\right )(M+1) + i$.
\subsection{Complexity Estimates: Theorems 2 and 3}
\label{subsec:3.2}

\subsubsection{The sequence $\left\{F_n\right\}$, its natural encoding, $F$, and Theorem 2}
\label{subsubsec:3.2.1}
For the next Definition, recall
Definition 8 in (3.1), where we defined 
the $b_{n,t,s}$ and $\alpha (n,t,x)$.
\begin{definition}  For all $n > 0,\ F_n$ is the permutation
of $\left\{ 0, \ldots , 2^{n(M+1)}-1\right \}$ defined as follows.
If $0 \leq \ell < 2^{n(M+1)}$, let $t = \text{IStep}(n,\ell )$.  Then
\[
F_n(\ell ) := b_{n,t,s},\text{ where } 
 s = \alpha (n,t,\ell ).
\]
Then,  take 
$F:  \mathbb{N}\times \mathbb{N}
\to \mathbb{N}$ to be the natural encoding of the sequence
$\left\{ F_n\right\}$.  
\end{definition}
\noindent
In view of 2. of Lemma 1, these $F_n$ are (obviously)
very natural admissible permutations of the $\left\{ 0, \ldots , 2^{n(M+1)}-1\right \}$.
\begin{remark}  Note that our definition of $F_n$ is equivalent
to stipulating that, in terms of the notation of Equation (8),
for all $1 \leq t \leq T_n,\ \left ( \overline{F_n}\right )_t$ is the identity permutation of
$\left\{ 1, \ldots , \gamma^O_{n,t}\right\}$.  This is one aspect of what, in our eyes,
justifies the claim that $F_n$ is the simplest admissible permutation of 
$\left\{ 0, \ldots , 2^{n(M+1)} - 1\right \}$.
Note, also, that with $t = \text{\textnormal{IStep}}(n,\ell )$ and $s =
\alpha(n,t,\ell )$, then, in fact, $s = 1 + \ell - \text{\textnormal{SMC}}^O(n,t)$; further,
for these $t, s$, we also have that $s = \beta (n,t,F(n,\ell ))$.
\qed
\end{remark} 
\subsubsection{$F$ is \textnormal{P-TIME} relative to $\tau^O_1$}
\label{subsubsec:3.2.2}
It was noted in Proposition 4 of {[}\ref{DSS}{]} that, as 
functions of $(n,i)$, with $i \leq n$, the
binomial coefficients $\binom{n}{i}$ and \textnormal{SBC} are 
computable in time polynomial in $n$.  Building on this, we have:

\begin{proposition}   As functions of $(n,t)$, with $1 \leq t \leq T_n$, the
function $\text{SMC}^O$ and the function $\gamma^O_{n,t}$ are computable in time
polynomial in $n$, relative to the function $\tau^O_1$.
The function $\text{\textnormal{IStep}}$ and the function $\alpha$ of \textnormal{Equation (7c) (Definition 8)} are \textnormal{P-TIME}
relative to $\tau^O_1$.  
\end{proposition}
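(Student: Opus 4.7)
The plan is to reduce all four claims to a polynomial (in $n$) number of queries to $\tau^O_1$, combined with polynomial-time arithmetic. The enabling observation is that $m = 2^{M+1}$ depends only on the fixed parameter $M$ associated with $O$, so
\[
|\mathbf{K}_n| \;=\; \binom{n+m-1}{m-1} \;=\; O(n^{m-1}),
\]
which is polynomial in $n$, and consequently $T_n+1 \le |\mathbf{K}_n|$ is polynomial as well. We can therefore afford to enumerate $\mathbf{K}_n$, or scan $t = 0,\ldots,T_n$, in the course of any single computation, and we can also check ``$\vec{k}\in\mathbf{K}_n$'' by verifying that the entries are nonnegative integers summing to $n$.

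First I would handle $\gamma^O_{n,t}$. Enumerate all $\vec{k}\in\mathbf{K}_n$; for each, invoke $\tau^O_1(n,\vec{k},t)$ to detect membership in $\mathbf{K}_{n,t}$; and accumulate the multinomial coefficient $\binom{n}{\vec{k}} = n!/(k_1!\cdots k_m!)$ for every positive query, using the identity $\gamma^O_{n,t} = \sum_{\vec{k}\in\mathbf{K}_{n,t}}\binom{n}{\vec{k}}$ from the paragraph following Definition 6 (and its refinement via $\tau^O_1$ in Remark 1(2)). Each multinomial coefficient is bounded by $m^n$, hence has bit-length $O(n)$, and is computable in time polynomial in $n$ directly from $\vec{k}$, so the whole accumulation is polynomial. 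Immediately, $\text{SMC}^O(n,t) = \sum_{t'<t}\gamma^O_{n,t'}$ (by the second part of Remark 1(2) and Definition 6), which is a polynomial-length sum of $O(n)$-bit integers; so $\text{SMC}^O$ is P-TIME relative to $\tau^O_1$.

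Next, $\text{IStep}$ follows from Remark 2(1): $\text{IStep}(n,\ell)$ is the least positive $t \le T_n$ with $\ell < \text{SMC}^O(n,t+1)$. Since $\text{SMC}^O$ is monotone nondecreasing in $t$, I obtain $\text{IStep}(n,\ell)$ by a linear scan (or binary search) over $t=0,1,\ldots,T_n$, invoking the routine for $\text{SMC}^O$ at each step; the total cost remains polynomial in $n$ and relative to $\tau^O_1$. Finally, that same monotonicity implies $\text{I}A_{n,t} = \{\text{SMC}^O(n,t),\ldots,\text{SMC}^O(n,t+1)-1\}$, a contiguous block of integers. Intersecting with $\{1,\ldots,\xi\}$ and taking cardinality is then a closed-form arithmetic expression in the three quantities $\text{SMC}^O(n,t)$, $\text{SMC}^O(n,t+1)$, and $\xi$, so $\alpha(n,t,\xi)$ is P-TIME relative to $\tau^O_1$ as well.

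The only real obstacle is bookkeeping on magnitudes: one must verify that $|\mathbf{K}_n|$ and $T_n+1$ are polynomial in $n$ (both follow from $m$ being a fixed constant in this paper), and that the multinomial coefficients, together with their partial sums up to $\text{SMC}^O(n,T_n+1) = 2^{n(M+1)}$, have bit-length $O(n)$. Once these magnitude estimates are in place, the steps above are clearly P-TIME relative to $\tau^O_1$, and Proposition 1 follows.
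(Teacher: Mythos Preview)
Your proposal is correct and follows essentially the same approach as the paper: both use Remark~1(2) to express $\gamma^O_{n,t}$ and $\text{SMC}^O$ as polynomial-length sums of multinomial coefficients weighted by $\tau^O_1$, Remark~2(1) to obtain $\text{IStep}$ by a bounded search over $t$, and the fact that $\text{I}A_{n,t}$ is an interval (which the paper extracts from Remark~3) to get $\alpha$ as a closed-form expression in $\text{SMC}^O(n,t)$, $\text{SMC}^O(n,t+1)$, and $\xi$. Your explicit bookkeeping on $|\mathbf{K}_n|$, $T_n$, and bit-lengths is a welcome addition that the paper leaves implicit or delegates to~{[}\ref{DSS}{]}.
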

\begin{proof}  First note that, as a function of $\vec{k} \in \mathbf{K}_n$ and of  $n ,\
\binom{n}{\vec{k}}$ is computable in time polynomial in $n$, by Proposition 4 of {[}\ref{DSS}{]} and
the standard expression for the multinomial coefficient as a $m-\text{fold}$ product
of binomial coefficients.  The first sentence of the Proposition is then immediate from item (2) of
Remark 1.  

That $\text{IStep}$ is P-TIME relative to $\tau^O_1$ then follows immediately from item (1) of Remark 2, and
that $\alpha$ is P-TIME relative to $\tau^O_1$ follows immediately from (the first part of) the final sentence
of Remark 3.
\end{proof}

\begin{remark}  In (a generalization of) the terminology of {[}\ref{DSS}{]}, we have shown that the relation
$\text{\textnormal{RI}}A$ is {\it tame} relative to $\tau^O_1$.  This is because, as noted, immediately after Definition 8, 
in the terminology of {[}\ref{DSS}{]}, $\alpha$ is the cardinality function for $\text{\textnormal{RI}}A$.
\end{remark}

The next Corollary then follows immediately from Proposition 1 and from Lemma 2 of {[}\ref{DSS}{]}.
\begin{corollary}  
The relation $\text{\textnormal{RI}}A$ is \textnormal{P-TIME} decidable relative to $\tau^O_1$.
As a function of $(n,t,s)$ with $1 \leq s \leq \alpha_{n,t}$, the function $a_{n,t,s}$ of \textnormal{Equation (7di) (Definition 8)} is \textnormal{P-TIME} relative to $\tau^O_1$.
\end{corollary}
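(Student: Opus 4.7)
The plan is to invoke the general ``tameness yields both decidability and P-TIME enumerating function'' machinery from [DSS], in its form relativized to $\tau^O_1$, as discussed in the paragraphs following Definition 8 and reiterated in Remark 4. Concretely, Proposition 1 shows that the cardinality function $\alpha(n,t,\xi)$ of $\text{RI}A$ is P-TIME relative to $\tau^O_1$, which is precisely the statement that $\text{RI}A$ is tame relative to $\tau^O_1$.

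For the first assertion, I would first observe that the domain of $\text{RI}A$ (the set of $(n,t,\ell)$ with $n > 0$, $0 \leq t \leq T_n$, and $0 \leq \ell < 2^{n(M+1)}$) is P-TIME decidable relative to $\tau^O_1$: the bound $\ell < 2^{n(M+1)}$ is checked in time polynomial in the length of the input, and the bound $t \leq T_n$ can be verified by searching for some $\vec{k} \in \mathbf{K}_n$ with $\tau^O_1(n,\vec{k},t) = 1$, which the P-TIME computability of $\text{SMC}^O$ from Proposition 1 lets us do quickly (alternatively, $T_n$ is determined by the least $t$ with $\text{SMC}^O(n,t+1) = 2^{n(M+1)}$). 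Given the mild domain hypothesis, the (relativized) content of item (6) of Remark 3 of [DSS] applies: for $(n,t,\ell)$ in the domain,
\[
\text{RI}A(n,t,\ell) \iff \alpha(n,t,\ell) = \alpha(n,t,\ell - 1) + 1,
\]
(with the case $\ell = 0$ handled separately by noting $\text{RI}A(n,t,0)$ holds iff $t = 0$). Since $\alpha$ is P-TIME relative to $\tau^O_1$, this gives the P-TIME decidability of $\text{RI}A$ relative to $\tau^O_1$.

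For the second assertion, I would appeal to the relativization of Lemma 2 of [DSS], which states that the enumerating function of a tame relation (with P-TIME decidable domain) is itself P-TIME. Since $a_{n,t,s}$ is exactly the enumerating function of $\text{RI}A$ by Equation (7di), and $\text{RI}A$ has been shown to be tame relative to $\tau^O_1$ with its domain P-TIME decidable relative to $\tau^O_1$, we conclude that $a_{n,t,s}$ is P-TIME relative to $\tau^O_1$.

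There is no substantive obstacle here: the work has already been done in Proposition 1 (establishing tameness) and in the cited results of [DSS] (whose relativizations are explicitly noted in the discussion preceding the Corollary to be completely straightforward). The only thing to check carefully is that the relativized versions of item (6) of Remark 3 and of Lemma 2 of [DSS] go through verbatim with $\tau^O_1$ as an oracle, which is the case because the arguments in [DSS] never use anything about the oracle beyond the ability to call the cardinality function, and that ability is preserved under relativization.
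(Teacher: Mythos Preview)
Your proposal is correct and follows essentially the same approach as the paper: both invoke Proposition 1 to obtain tameness of $\text{RI}A$ relative to $\tau^O_1$, then apply the relativized item (6) of Remark 3 of [DSS] for decidability and the relativized Lemma 2 of [DSS] for the enumerating function $a_{n,t,s}$. You have simply spelled out in more detail (the domain check, the explicit $\alpha(n,t,\ell)=\alpha(n,t,\ell-1)+1$ criterion) what the paper compresses into a one-line pointer to these results.
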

\begin{proof}  This is simply because, as noted following Definition 8, the function $a_{n,t,s}$ is
the enumerating function for $\text{\textnormal{RI}}A$.
\end{proof}

We next show, as promised, that the dependence of $\tau^O_2$ on $O$ is really only formal.  It is now that we make
use of the enumeration, $\left ( \varsigma_s\vert 1 \leq s \leq m\right )$ of $\{ 0, 1\}^m$ introduced in the third
paragraph of (1.1).
\begin{proposition}  As a function of $(n, s, \ell, b)$ with $n > 0,\ 1 \leq s \leq m,\  
0 \leq \ell \leq 2^{n(M+1)},\ 0 \leq b\leq n,\ \tau^O_2$ is computable in time polynomial in $n$.
\end{proposition}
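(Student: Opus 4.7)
The plan is to give an explicit algorithm that, on input $(n,s,\ell,b)$, returns $\tau^O_2(n,s,\ell,b)$ in time polynomial in $n$, with the dependence on $O$ confined to a fixed (constant-size) lookup providing $\varsigma_s$ from $s$; this justifies calling the dependence ``really only formal.''

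The combinatorial heart of the matter is to observe that the $n(M+1)$-bit binary expansion of $\ell$ splits naturally into $n$ consecutive blocks of $M+1$ bits, with the $i^{\text{th}}$ block encoding exactly the restriction of $\vec{e}_\ell$ to $\overline{J}_i$. I would establish this by appealing to (3.1.3): from $j_{i,r} = \left(\binom{i}{2}+(r-1)\right)(M+1)+i$ one sees that $\overline{J}_i$ lies entirely inside the $i^{\text{th}}$ block of rows, whose integer entries are $\binom{i}{2}(M+1)+1,\ldots,\binom{i+1}{2}(M+1)$; since these block-intervals are strictly increasing in $i$, every element of $\overline{J}_i$ is less than every element of $\overline{J}_{i+1}$, and within $\overline{J}_i$ the listing $j_{i,1}<\cdots<j_{i,M+1}$ is already in increasing order. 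Hence, in the increasing enumeration of $\overline{J}^n$ as $\eta_1<\cdots<\eta_{n(M+1)}$, one has $\eta_{(i-1)(M+1)+k}=j_{i,k}$, and by the lexicographic definition of $\vec{e}_\ell$ the bit $(\vec{e}_\ell)_{j_{i,k}}$ is precisely the bit at position $(i-1)(M+1)+k$ (counting from the most significant) of the $n(M+1)$-bit representation of $\ell$.

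With this block structure in hand, the computation becomes immediate. For $x\in E_{n,\ell}$, $R_i(x)=O(P_i(x))$ depends only on $\left((\vec{e}_\ell)_{j_{i,k}}\right)_{k=1}^{M+1}$, and by the defining property of $(\varsigma_s)$ laid out in (1.1), this tuple equals $\varsigma_{s'}$ precisely when $R_i(x)=o_{s'}$. Therefore $\tau^O_2(n,s,\ell,b)$ equals the number of $i$ with $b<i\leq n$ for which the $i^{\text{th}}$ block of $M+1$ bits of $\ell$ equals the length $M+1$ bitstring $\varsigma_s$. The algorithm is then: write $\ell$ in $n(M+1)$-bit binary, loop over $i=b+1,\ldots,n$, extract the $i^{\text{th}}$ block of $M+1$ bits, and increment a counter on equality with $\varsigma_s$. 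Each iteration is $O(M+1)$ work and there are at most $n$ iterations, so the total running time is polynomial in $n$.

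There is no substantive obstacle here. The only point worth flagging, and the reason the Proposition is stated at all, is that although the specific bitstring $\varsigma_s$ (and the enumeration $(\varsigma_s\mid 1 \leq s \leq m)$ as a whole) is derived from $O$, it is a constant-size object, since $m = 2^{M+1}$ and $M$ is fixed along with $O$. Consequently the enumeration can be hard-coded into the algorithm, and no consultation of $\tau^O_1$ or any other $O$-dependent oracle is needed. This is precisely the sense in which, as was announced in the discussion preceding Definition 6, the dependence of $\tau^O_2$ on $O$ is ``really only formal.''
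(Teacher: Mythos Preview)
Your proposal is correct and takes essentially the same approach as the paper: express $\tau^O_2(n,s,\ell,b)$ as the sum over $b<i\leq n$ of the indicator of the relation ``the $i^{\text{th}}$ outcome coordinate equals $o_s$,'' and observe that this relation is decidable in time polynomial in $n$ by comparing the relevant $(M+1)$-bit block of $\ell$ with the fixed bitstring $\varsigma_s$. Your version is in fact more explicit than the paper's two-line proof, spelling out via (3.1.3) why the $\overline{J}_i$ occupy consecutive positions in the increasing enumeration of $\overline{J}^n$ and hence why the $i^{\text{th}}$ block of the binary expansion of $\ell$ is exactly what determines $(\text{I}\vec{o}_n(\ell))_i$; the paper leaves this block correspondence implicit (its displayed condition even omits the $i$-dependence in the subscript, which your write-up supplies correctly).
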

\begin{proof}   $\tau^O_2(n, s, \ell, b)$ is the sum over $i$ such that $b < i \leq n$ of the characteristic (indicator) function of the relation,
$\Theta (n, s,\ell , i)$ which holds iff $\left ( \text{I}\vec{o}_n(\ell )\right )_i = o_s$.  This relation is P-TIME decidable since
it holds iff for all $j$ with $1 \leq j \leq M+1,\ \left ( \text{I}\vec{b}_n(\ell )\right )_j = \left ( \varsigma_s\right )_j$.
\end{proof}
We now have the analogue of Lemma 3 of {[}\ref{DSS}{]}.  The argument here is considerably more
involved and delicate, and requires the information encoded in $\tau^O_1$.  As already
noted, appeals to $\tau^O_2$ are just a convenience, in view of Proposition 2 .
Nevertheless, as with its prototype in {[}\ref{DSS}{]}, a key ingredient is S. Buss's suggestion of
(something like) a binary search.  
\begin{lemma}
The function $\beta (n,t,\xi)$ of 
Definition 8 is \textnormal{P-TIME} relative to $\tau^O_1$.
\end{lemma}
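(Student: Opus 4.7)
The plan is to reduce $\beta(n,t,\xi)$ to a polynomial-size sum of multinomial coefficients and $\tau^O_1$-values by performing a base-$m$ analogue of binary search on $\xi$, where $m = 2^{M+1}$.  The enabling structural observation comes from (3.1.3): since each $\overline{J}_i$ sits inside the $i^{\text{th}}$ block of rows (positions $\binom{i}{2}(M+1)+1, \ldots, \binom{i+1}{2}(M+1)$), and the blocks are consecutive in $\mathbb{Z}^+$, every element of $\overline{J}_{i-1}$ is strictly less than every element of $\overline{J}_i$.  Hence the increasing enumeration $j^*_1 < \cdots < j^*_{n(M+1)}$ of $\overline{J}^n$ breaks into $n$ consecutive chunks of length $M+1$, the $i^{\text{th}}$ chunk being the increasing enumeration of $\overline{J}_i$.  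Writing $\ell$ in binary most-significant-bit-first, its bits group into $n$ consecutive chunks of $M+1$ bits; the $i^{\text{th}}$ chunk is the restriction of $\vec{e}_\ell$ to $\overline{J}_i$, which via the bijection $\varsigma$ pins down $R_i$ on $E_{n,\ell}$.  Equivalently, $\ell$ has a canonical base-$m$ expansion $\ell = (g_1, \ldots, g_n)_m$, and $R_i = o_{s(g_i)}$ on $E_{n,\ell}$, where $s(g)$ denotes the unique $s$ with $\varsigma_s = g$ under the usual bitstring-integer identification.

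With this in hand, write $\xi = (h_1, \ldots, h_n)_m$ and split $\{0,1, \ldots, \xi\} = \{\xi\} \sqcup \bigsqcup_{i=1}^n \Lambda_i$ by the first position at which $\ell$ diverges from $\xi$, where $\Lambda_i := \{\ell : g_j = h_j \text{ for all } j < i \text{ and } g_i < h_i\}$.  For $\ell \in \Lambda_i$, the frozen prefix contributes a fixed frequency vector $\vec{u}_i$ with $(\vec{u}_i)_s = \vert\{j < i : h_j = g(s)\}\vert$; the digit $g_i$ adds $\vec{e}_{s(g_i)}$; and the free suffix $(g_{i+1}, \ldots, g_n)$ contributes an arbitrary $\vec{w} \in \mathbf{K}_{n-i}$ with multiplicity $\binom{n-i}{\vec{w}}$.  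Since $\text{IWeight}(n,\ell) = t$ is equivalent to the total frequency vector lying in $\mathbf{K}_{n,t}$, which is recorded by $\tau^O_1(n, \vec{u}_i + \vec{e}_s + \vec{w}, t) = 1$, the count of $\ell \in \Lambda_i$ with $\text{IWeight}(n,\ell) = t$ is
\[
N_i\ :=\ \sum_{s\, :\, g(s) < h_i}\ \sum_{\vec{w} \in \mathbf{K}_{n-i}}\ \binom{n-i}{\vec{w}}\,\tau^O_1(n,\, \vec{u}_i + \vec{e}_s + \vec{w},\, t),
\]
and assembling all the pieces and correcting for the endpoints gives
\[
\beta(n,t,\xi)\ =\ \sum_{i=1}^{n} N_i\ +\ \tau^O_1(n, \vec{k}(\xi), t)\ -\ \tau^O_1(n, \vec{k}(0), t),
\]
where $\vec{k}(\xi)$ and $\vec{k}(0)$ are the total frequency vectors determined by the base-$m$ digits of $\xi$ and of $0$, respectively.

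For the complexity: the digits $h_i$ are read from $\xi$ directly; the $\vec{u}_i$ can be accumulated incrementally in polynomial time (or, in the spirit of the preamble to this Lemma, produced via $(\vec{u}_i)_s = \tau^O_2(n,s,\xi,0) - \tau^O_2(n,s,\xi,i-1)$, which is P-TIME by Proposition 2); each $\binom{n-i}{\vec{w}}$ is P-TIME by Proposition 4 of {[}\ref{DSS}{]} together with the product formula used in Proposition 1; and each $\tau^O_1$ query is a single oracle call.  The outer sum has $n$ terms, the inner double sum has at most $m \cdot \binom{n-i+m-1}{m-1} = O(n^{m-1})$ terms, and $m = 2^{M+1}$ is a fixed constant of the problem, so the total cost is $O(n^m)$ arithmetic-plus-oracle operations, polynomial in $n$ relative to $\tau^O_1$.

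The main delicate point is securing the clean base-$m$ grouping of the bits of $\ell$: without the $(M+1)$-fold repetition of row lengths stipulated in (1.1), the $\overline{J}_i$ would interleave within $\overline{J}^n$, the bits of $\ell$ would not group one-to-one with the $R_i$, and no such decomposition would be available.  Once this structural fact is in hand, the rest is the natural multinomial generalization of the S. Buss binary-search argument underlying Lemma 3 of {[}\ref{DSS}{]}.
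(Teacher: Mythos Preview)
Your argument is correct, and it takes a genuinely different route from the paper's.  The paper walks through the \emph{binary} representation of $\xi$ one bit at a time (indexed by $\zeta \in \{1,\ldots,n(M+1)\}$), whereas you walk through the base-$m$ representation one digit at a time (indexed by $i \in \{1,\ldots,n\}$).  Because a single bit position $\zeta$ typically falls strictly inside $\overline{J}_{b(\zeta)}$, the paper must handle the case where $R_{b(\zeta)}$ is only \emph{partially} determined by the frozen prefix; this is the source of the set $D = \overline{J}_{b(\zeta)} \cap \{1,\ldots,\zeta-1\}$, the functions $\sigma_\ell$, and the sum over $\sigma \in \{0,1\}^D$ (Equations (10)--(15) of the paper).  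Your base-$m$ walk sidesteps this entirely: each step freezes exactly one whole $R_i$, so the count $N_i$ decomposes cleanly into a fixed prefix vector $\vec{u}_i$, one free outcome for the current digit, and a free multinomial tail---no partial-determination bookkeeping required.  The result is shorter and more transparent, at the cost of being slightly further from the literal ``binary search'' template of Lemma~3 of {[}\ref{DSS}{]}.  Both arguments rest on the same structural fact, namely that the $\overline{J}_i$ occupy disjoint consecutive ranges within the increasing enumeration of $\overline{J}^n$; the paper records this in (1.2.2) and (3.1.3), exactly as you do.  One small quibble: the non-interleaving property requires only that each row length be repeated \emph{at least} $M+1$ times, not exactly $M+1$; see the final paragraph of (1.2.2).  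Also, your use of $\vec{e}_s$ for the standard basis vector clashes with the paper's $\vec{e}_\ell \in \{0,1\}^{\overline{J}^n}$; a different symbol would avoid confusion.
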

\begin{proof}   In analogy with the proof of Lemma 3 of {[}\ref{DSS}{]}, given $n, t$, and $0 \leq x < 2^{n(M+1)}$, we ``walk down the 1's of the binary representation of $\xi$''; here we use the multinomial coefficients to count as we go.  More precisely, with 
\[
\xi = \sum_{\zeta =1}^{n(M+1)} 2^{n(M+1)-\zeta}\left ( \text{I}\vec{b}_n(\xi )\right )_\zeta ,\text{  (as in (2.1.3) with }\xi\text{ in place of }x),
\]
if $\left ( \text{I}\vec{b}_n(\xi )\right )_\zeta = 0$, we let $\text{I}\overline{B}_{n,t,\xi ,\zeta } := \emptyset$, while if
$\left ( \text{I}\vec{b}_n(\xi )\right )_\zeta = 1$, we let 
\[
\text{I}\overline{B}_{n,t,\xi ,\zeta } := \left\{ \ell \in \text{I}B_{n,t}\biggr\vert \left ( \text{I}\vec{b}_n(\ell )\right )_\zeta = 0\text{ and for all }j < \zeta ,\
\left ( \text{I}\vec{b}_n(\ell )\right )_j = \left ( \text{I}\vec{b}_n(\xi )\right )_j\right\}.
\]
If $\left ( \text{I}\vec{b}_n(\xi )\right )_\zeta = 1,\ \xi \in \text{I}B_{n,t}$ and for all $j > \zeta,\
\left ( \text{I}\vec{b}_n(\xi )\right )_j = 0$, we let $\text{I}B_{n,t,\xi ,\zeta } := \text{I}\overline{B}_{n,t,\xi ,\zeta } \cup \{\xi\}$;
otherwise, we let $\text{I}B_{n,t,\xi ,\zeta } := \text{I}\overline{B}_{n,t,\xi ,\zeta }$. 
Finally, we set:  $\beta^*(n,t,\xi ,\zeta ) := \left\vert \text{I}B_{n,t,\xi ,\zeta }\right\vert$.

At ``stage $\zeta$'', in the non-trivial case, where $\left ( \text{I}\vec{b}_n(\xi )\right )_\zeta = 1$, we use the multinomial coefficients to compute $\beta^*(n,t,\xi ,\zeta )$.  Since (with $n, t, \xi$
fixed) the $\text{I}B_{n,t,\xi ,\zeta }$ are pairwise disjoint and their union is $\text{I}B_{n,t} \cap \{ 1, \ldots , \xi \}$, we'll have that:
\[ 
\beta (n,t,\xi ) = \sum_{\zeta =1}^{n(M+1)}\beta^*(n,t,\xi ,\zeta ).
\]

Thus, it suffices to show that $\beta^*$ is P-TIME relative to $\tau^O_1$, which we now undertake.  When $\left ( \text{I}\vec{b}_n(\xi )\right )_\zeta = 1$, we consider the superset, $\overline{C}(n,\xi ,\zeta ),$ of 
$\text{I}\overline{B}_{n,t,\xi ,\zeta }$ consisting of those
$\ell$ with $1 \leq \ell < \xi$ such that $\left ( \text{I}\vec{b}_n(\ell )\right )_\zeta = 0$ and for all $j < \zeta ,\
\left ( \text{I}\vec{b}_n(\ell )\right )_j = \left ( \text{I}\vec{b}_n(\xi )\right )_j$.  If $\xi \in 
\text{I}B_{n,t,\xi ,\zeta }$, we let $C(n,\xi ,\zeta ) := \overline{C}(n,\xi ,\zeta ) \cup \{\xi\}$; otherwise, we let
$C(n,\xi ,\zeta ) := \overline{C}(n,\xi ,\zeta )$.  

For $\ell \in C(n,\xi ,\zeta ),\ \ell \in
\text{I}B_{n,t,\xi ,\zeta }$ iff $\text{I}\vec{k}_n(\ell ) \in \mathbf{K}_{n,t}$ 
iff $\tau^O_1\left ( n, \text{I}k_{n,1}(\ell ), \ldots , \text{I}k_{n,m}(\ell ), t\right ) = 1$ (note that if
$\ell = \xi$, then this will automatically be true, by the definition of $C(n,\xi,\zeta )$).  

For $n, \xi , \zeta$ as above, and for $\vec{k} \in \mathbf{K}_n$, let:
\[
\overline{c}(n,\xi ,\zeta ,\vec{k}) := \text{the frequency of }\vec{k}\text{ in }\left ( \text{I}\vec{k}_n(\ell )\vert \ell \in \overline{C}(n,\xi ,\zeta )\right ),\text{ and }
\]
\[
c(n,\xi ,\zeta ,\vec{k}) := \text{the frequency of }\vec{k}\text{ in }\left ( \text{I}\vec{k}_n(\ell )\vert \ell \in C(n,\xi ,\zeta )\right ).
\]
It is then immediate that:
\begin{equation}
\beta^*(n,t,\xi ,\zeta ) = \underset{\vec{k} \in \mathbf{K}_n}\sum
c(n,\xi ,\zeta ,\vec{k})\tau^O_1(n,\vec{k},t).
\end{equation} 

It is also clear that $c(n,\xi ,\zeta ,\vec{k}) = \overline{c}(n,\xi ,\zeta ,\vec{k})$, 
unless $\xi \in C(n,\xi ,\zeta )$ and $\text{I}\vec{k}_n(\xi ) = \vec{k}$; in this case
$c(n,\xi ,\zeta ,\vec{k}) = \overline{c}(n,\xi ,\zeta ,\vec{k}) + 1$.  
It will, therefore, suffice to show that the function $\overline{c}$ is P-TIME, which is the burden of what follows.

Let $b(\zeta ),\ r(\zeta ),\ i(\zeta )$ be those $b, r, i$, respectively, such that, as in (3.1.3), $\zeta = \eta_{b,r,i}$.   Clearly, these are P-TIME computable from $\zeta$.  Note that if $\ell \in \overline{C}(n,\xi ,\zeta )$ and $i > b(\zeta )$, 
then $\left ( \text{I}\vec{o}_n(\ell )\right )_i = \left ( \text{I}\vec{o}_n(\xi )\right )_i$.  
Let $s(n, \zeta , \ell )$ be that $s$ such that $\left ( \text{I}\vec{o}_n(\ell )\right )_{\zeta} = o_s$, and note that $s(n, \zeta , \ell )$ is also that $s$ such that
$\text{I}k_{n,s}(\ell, b(\zeta )) < \text{I}k_{n,s}(\ell, b(\zeta )-1)$, i.e., such that $\tau^O_2(n,s,\ell ,b(\zeta )) <
\tau^O_2(n,s,\ell ,b(\zeta )-1)$; thus, $s(n, \zeta, \ell )$ is polynomial time computable. 

Let $D := \overline{J}_{b(\zeta )} \cap \{ 1, \ldots , \zeta-1\}$, and let $d = \left\vert D\right\vert$.   For $\ell \in 
\overline{C}(n,\xi ,\zeta )$, let $\sigma_\ell$ be that function $\sigma$ from $D$ to $\{0, 1\}$ defined
by $\sigma (j) = \left ( \text{I}\vec{b}_n(\ell )\right )_j$.   Clearly
$s(n,\zeta , \ell )$ depends only on $\sigma_\ell$, and equally clearly, every $\sigma \in \{0, 1\}^D$ arises
as $\sigma_\ell$, for some $\ell \in \overline{C}(n,\xi ,\zeta )$.  A key step in what follows will be to compute the
frequency with which this occurs, but for now, this justifies defining $s^*:\{0, 1\}^D \to \{ 1, \ldots , m\}$ by 

\begin{equation}
s^*(\sigma ) = s(n,\zeta , \ell)\text{ for any }\ell \in \overline{C}(n,\xi ,\zeta )\text{ such that }\sigma = \sigma_\ell.
\end{equation} 

We are now in a position to prove:
\begin{equation}
\vert \overline{C}(n,\xi ,\zeta )\vert = 2^{(b(\zeta )-1)(M+1)+d}.
\end{equation}

\begin{proof}{\it (of Equation (11))}  We give an explicit bijection $h$ from $\overline{C}(n,\xi ,\zeta )$ to the set $C^*$ of pairs,
$\left (\ell^*, \sigma\right )$ with $0 \leq \ell^* < 2^{(b(\zeta )-1)(M+1)}$ and $\sigma \in \{ 0, 1\}^D$.  
Given $\ell \in \overline{C}(n,\xi ,\zeta )$,
we let $h_1(\ell )$ be that $\ell^*$ such that $E_{n,\ell} \subseteq  E_{b(\zeta )-1,\ell^*}$, and let
$h_2(\ell ) = \sigma_\ell$.  Let $h(\ell ) :=
\left ( h_1(\ell ), h_2(\ell )\right )$.   Then, $h$ is one-to-one, since if $\ell,\ \ell ' \in \overline{C}(n,\xi ,\zeta )$ and $\ell ' \neq \ell$,
choosing $y \in E_{n, \ell }$, then for some $j \in D \cup \bigcup\left\{ \overline{J}_i\vert 1 \leq i < b(\zeta )\right\}$ and all $y' \in E_{n,\ell '},\ \varepsilon_j\left ( y'\right ) \neq \varepsilon_j(y)$.  Fixing such a $j$, if $j \in D$, then $h_2\left ( \ell  '\right ) \neq h_2(\ell )$, while if $j \not\in D$, then $h_1\left ( \ell '\right ) \neq h_1(\ell )$.  But $h$ is also onto, since if $0 \leq \ell^* < 2^{(b(\zeta )-1)(M+1)}$ and $\sigma : D \to \{ 0, 1\}$, let $y^* \in E_{b(\zeta )-1, \ell^*}$, let $y \in E_{n,\xi}$.  We may assume, WLOG, that $y^*$ has
the following properties, since, if necessary, it can be modified to have them, 
without affecting membership in $E_{b(\zeta )-1, \ell^*}$: 
\begin{itemize}
\item $\varepsilon_j\left ( y^*\right ) = \varepsilon_j(y)$, for all $j > \zeta$,
\item $\varepsilon_\zeta\left ( y^*\right ) = 0$,
\item $\varepsilon_j\left ( y^*\right ) = \sigma (j)$, for all $j \in D$.
\end{itemize}
Let $\ell$ be such that $y^* \in E_{n,\ell}$.  By construction, $\ell \in \overline{C}(n,\xi ,\zeta )$ and 
$h(\ell ) = \left ( \ell^*,  \sigma\right )$.
\end{proof}

We will continue to work with the bijection $h$ constructed in the proof of Equation (11).   
Let $\ell \in \overline{C}(n,\xi ,\zeta )$, and
let $h(\ell ) = \left ( \ell^*, \sigma\right )$.  It is then clear from the construction of $h$ 
that for all $1 \leq s \leq m$: 
\begin{equation}
\text{I}k_{b(\zeta )-1,s}\left ( \ell^*, 0\right ) = 
      \begin{cases}
                           \tau^O_2(n,s,\ell ,0) - \tau^O_2(n,s,x,b(\zeta )) - 1,\text { if }s = s(\sigma ), \\       
                            \tau^O_2(n,s,\ell ,0) - \tau^O_2(n,s,x,b(\zeta )),\text{ otherwise.}
      \end{cases}
\end{equation}

It follows from Equation (12) (and the proof of Equation (11)) that whenever $\left ( k^*_1, \ldots , k^*_m\right ) \in
\mathbf{K}_{b(\zeta ) - 1}$, there is some $\ell \in \overline{C}(n,\xi ,\zeta )$ such that (with $\sigma = \sigma_\ell$) for
all $1 \leq s \leq m$, the analogue of Equation (12) holds, with $k^*_s$ in place of $\text{I}k_{b(\zeta ) - 1, s}\left (\ell^*, 0\right )$.
Also for any such $\vec{k}^* = \left ( k^*_1, \ldots , k^*_m\right )$,  the frequency of $\vec{k}^*$ in
$\left ( \text{I}\vec{k}_{b(\zeta )-1}\left ( \ell^*\right )\vert 0 \leq \ell^* < 2^{(b(\zeta )-1)(M+1)}\right )$
is simply $\binom{b(\zeta )-1}{\vec{k}^*}$.

For $(n,\xi ,\zeta ,\vec{k}) \in \text{dom}\,\overline{c}, \sigma \in \{ 0, 1\}^D$, and $1 \leq s \leq m$, let: 

\begin{equation}
k^*_s(n,\xi ,\zeta ,\vec{k},\sigma ) = 
  \begin{cases}
          k_s - \tau^O_2(n,s,\xi ,b(\zeta )) - 1,\text{ if }s = s(\sigma ),\\
          k_s - \tau^O_2(n,s,\xi ,b(\zeta )),\text{ otherwise.}
  \end{cases}
\end{equation}
We say that $(n,\xi ,\zeta ,\vec{k},\sigma )$ is {\it bad }if $\left ( k^*_1,\ldots , k^*_m\right ) \not\in
\mathbf{K}_{b(\zeta )-1}$, where $k^*_s =
k^*_s(n,\xi ,\zeta ,\vec{k},\sigma )$, for $s = 1, \ldots , m$;
otherwise, $(n,\xi ,\zeta ,\vec{k},\sigma )$ is {\it good}.  Note that it is P-TIME decidable 
whether $(n,\xi ,\zeta ,\vec{k},\sigma )$ is bad, and that it is good iff for some $\ell \in
\overline{C}(n,\xi ,\zeta ),\ \vec{k} = \text{I}\vec{k}_n(\ell )$ and $\sigma = \sigma = \sigma_\ell$.
When $(n,\xi ,\zeta ,\vec{k},\sigma )$ is good, we also let 
$\vec{k}^*(n,\xi ,\zeta ,\vec{k},\sigma ) = \left ( k^*_1,\ldots , k^*_m\right )$, with each $k^*_s = 
k^*_s(n,\xi ,\zeta ,\vec{k},\sigma )$.

Then, we define:

\begin{equation}
c^*(n,\xi ,\zeta ,\vec{k},\sigma ) = 
     \begin{cases}
           0,\text{ if } (n,\xi ,\zeta ,\vec{k},\sigma )\text{ is bad,}\\
           \binom{b(\zeta )-1}{\vec{k}^*},\text{ where }\vec{k}^* = 
                \vec{k}^*(n,\xi ,\zeta ,\vec{k},\sigma ),\text{ otherwise.}
     \end{cases}
\end{equation}

It is then clear that $c^*$ is P-TIME and that:

\begin{equation}
\overline{c}(n,\xi ,\zeta ,\vec{k}) = 
\sum_{\sigma \in \{ 0, 1\}^D}c^*(n,\xi ,\zeta ,\vec{k},\sigma ).
\end{equation}

Thus, as required, $\overline{c}(n,\xi ,\zeta ,\vec{k})$ is also P-TIME.
\end{proof}

It is now, in the light of the proof of Lemma 2, that the ``raison d'$\hat{\text{e}}$tre'' for our choice of the $P_i$ finally appears
clearly.  For $\ell \in \overline{C}(n,\xi ,\zeta )$, for $i > b(\zeta )$, the $\left (\text{I}\vec{o}_n(\ell )\right )_i$ are
completely determined (by $\xi$), while for $i < b(\zeta )$, the $\left (\text{I}\vec{o}_n(\ell )\right )_i$ are
completely arbitrary.  It is only $\left (\text{I}\vec{o}_n(\ell )\right )_{b(\zeta)}$ which is ``partially determined'',
and the partial determination (by $\xi$) is completed by the $\sigma_\ell$.  This is the key to the proof of Equation (11),
from which the rest of the proof follows easily. 

Omitting the analogue of Remark 4, we proceed directly to the analogue of Corollary 3.
\begin{corollary}  
The relation $\text{\textnormal{RI}}B$ is \textnormal{P-TIME} decidable relative to $\tau^O_1$.
As a function of $(n,t,s)$ with $1 \leq s \leq \beta_{n,t}$, the function $b_{n,t,s}$ of 
\textnormal{Equation (7dii) (Definition 8)} is \textnormal{P-TIME} relative to 
$\tau^O_1$.
\end{corollary}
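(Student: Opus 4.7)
The plan is to mirror exactly the derivation of Corollary 3 from Proposition 1, with Lemma 2 playing the role that the final sentence of Proposition 1 played there. The key observations set up in the text between Proposition 1 and Corollary 3 apply equally well on the $B$-side: the function $\beta$ of Equation (7c) is the cardinality function for the relation $\text{RI}B$, and the function $b_{n,t,s}$ of Equation (7dii) is its enumerating function. Moreover, the domain condition for the relation $\text{RI}B$ (namely, that $0 \leq \ell < 2^{n(M+1)}$ and $0 \leq t \leq T_n$) is outright P-TIME decidable, so the mild additional hypothesis needed for the relativized versions of item (6) of Remark 3 and of Lemma 2 from [DSS] is satisfied.

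Given this, I would first invoke Lemma 2 (just proved) to conclude that $\text{RI}B$ is \emph{tame relative to $\tau^O_1$}, in the generalized terminology explained just before Proposition 1 and in Remark 4. Then I would apply the relativization to $\tau^O_1$ of item (6) of Remark 3 of [DSS]: for any $(n,t,\ell)$ in the domain, one has $\text{RI}B(n,t,\ell)$ iff $\beta(n,t,\ell) = \beta(n,t,\ell-1) + 1$ (interpreting $\beta(n,t,0) := 0$). Since $\beta$ is P-TIME relative to $\tau^O_1$ by Lemma 2, this equivalence gives a P-TIME (relative to $\tau^O_1$) decision procedure for $\text{RI}B$.

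For the second assertion, I would apply the $\tau^O_1$-relativized version of Lemma 2 of [DSS], which states that the enumerating function of a tame relation (satisfying the mild P-TIME domain hypothesis) is P-TIME. Since $b_{n,t,s}$ is precisely the enumerating function of $\text{RI}B$ in the sense of Equation (7dii), this yields that $b_{n,t,s}$ is P-TIME relative to $\tau^O_1$, as a function of $(n,t,s)$ with $1 \leq s \leq \beta_{n,t}$.

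No real obstacle arises: the heavy lifting has already been done in Lemma 2, and the two relativized citations from [DSS] are explicitly flagged in the text (in the paragraph discussing tameness preceding Proposition 1) as going through in a completely straightforward way for $\tau^O_1$. The only minor care point is to ensure that the P-TIME decidability of the domain of $\text{RI}B$ is noted, so that the hypothesis of the two invoked results from [DSS] is in force; this is immediate from the form of the domain. The proof itself can therefore be written in a single sentence parallel to the proof of Corollary 3.
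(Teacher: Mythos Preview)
Your proposal is correct and follows essentially the same approach as the paper: invoke Lemma 2 to get tameness of $\text{RI}B$ relative to $\tau^O_1$, then appeal to the relativized results from [DSS] (item (6) of Remark 3 for P-TIME decidability, Lemma 2 for the enumerating function). If anything, your attribution is slightly more precise than the paper's own one-sentence proof, which cites Lemma 2 of [DSS] for both conclusions rather than distinguishing item (6) of Remark 3 for the decidability part.
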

\begin{proof}   This follows from Lemma 2 and Lemma 2 of {[}\ref{DSS}{]}, because $b$ is enumerating
function of $\text{RI}B$ while $\beta$ is its cardinality function, so Lemma 2, above,
establishes that $\text{RI}B$ is tame.  Since it is tame, it is P-TIME decidable, by Lemma 2
of {[}\ref{DSS}{]}.
\end{proof}
\begin{theorem}  F is \textnormal{P-TIME} relative to $\tau^O_1$.
\end{theorem}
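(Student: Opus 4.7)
The plan is to prove Theorem 2 by direct inspection of Definition 9, which expresses $F_n(\ell)$ as a three-step composition of functions that have already been shown to be P-TIME relative to $\tau^O_1$. Given input $(n,\ell)$ with $0 \leq \ell < 2^{n(M+1)}$, the definition prescribes: first compute $t := \text{IStep}(n,\ell)$; then compute $s := \alpha(n,t,\ell)$; and finally output $F(n,\ell) = b_{n,t,s}$.

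The three sub-computations are handled by the prior results in this subsection. Proposition 1 asserts that both $\text{IStep}$ and $\alpha$ are P-TIME relative to $\tau^O_1$, which takes care of the first two steps. Corollary 4 asserts that the enumerating function $b_{n,t,s}$ of $\text{RI}B$ is P-TIME relative to $\tau^O_1$; this in turn rests on Lemma 2, which supplied the P-TIME (relative to $\tau^O_1$) computability of the cardinality function $\beta$ and is the substantive technical work of the section.

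To finish, I would verify that the intermediate outputs fit within polynomial size of the input, so that the composition of three P-TIME (relative to $\tau^O_1$) procedures is again P-TIME (relative to $\tau^O_1$). This is straightforward bookkeeping: $t \leq T_n \leq \binom{n+m-1}{m-1}$ is polynomial in $n$ (with $m = 2^{M+1}$ a fixed constant in this paper); $s \leq \gamma^O_{n,t} < 2^{n(M+1)}$ has bit-length bounded by that of $\ell$; and the final output $b_{n,t,s} < 2^{n(M+1)}$ has the same bit-length as $\ell$. Hence each intermediate value is of size polynomial in the input size, and the total number of oracle calls to $\tau^O_1$ across the three stages is polynomial in $n$.

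The main obstacle does not arise in this proof itself, having already been discharged in Lemma 2: the P-TIME (relative to $\tau^O_1$) computation of $\beta$, via S.~Buss's binary-search idea together with the careful bookkeeping of multinomial coefficient sums through the auxiliary functions $\overline{c}$ and $c^*$, is the delicate technical step. Once Corollary 4 is in hand, Theorem 2 reduces to observing that $F(n,\ell) = b_{n,\text{IStep}(n,\ell),\,\alpha(n,\text{IStep}(n,\ell),\ell)}$ and invoking closure of the relativized P-TIME class under composition.
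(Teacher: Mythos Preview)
Your proposal is correct. The core computation you describe---compute $t=\text{IStep}(n,\ell)$, then $s=\alpha(n,t,\ell)$, then output $b_{n,t,s}$---is precisely what the paper's own argument reduces to in its final paragraph, where it observes that the unique solution $\ell'$ is obtained via $t=\text{IStep}(n,\ell)$, $s=1+\ell-\text{SMC}^O(n,t)$ (equivalently $s=\alpha(n,t,\ell)$, by Remark~3), and $\ell'=b_{n,t,s}$.

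The paper takes a slightly different route to get there: it first packages the computation inside a three-place relation $\Gamma(n,\ell,\ell')$ (Equation~(16)), defined so that $\Gamma(n,\ell,\ell')$ holds iff $\beta(n,t,\ell')\cdot\chi(n,t,\ell')=\alpha(n,t,\ell)$ with $t=\text{IStep}(n,\ell)$ and $\chi$ the characteristic function of $\text{RI}B$. It then argues that $\Gamma$ is P-TIME decidable relative to $\tau^O_1$, that for each $(n,\ell)$ there is a unique $\ell'$ satisfying it, and that this unique $\ell'$ equals $F(n,\ell)$. Multiplying by $\chi$ is what forces uniqueness. This implicit characterization of $F$ as the unique solution of an equation is the paper's analogue of Equation~(7) in [\ref{DSS}] and may be useful for the kinds of inverse-complexity questions raised in (3.3) (e.g., recovering $\tau^O_1$ from $F$ and $\text{Inv}F$), but it is not needed merely to establish that $F$ is P-TIME relative to $\tau^O_1$. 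Your direct reading of Definition~9, together with the closure of relativized P-TIME under composition and your size check on the intermediate values, is a cleaner path to the stated conclusion.
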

\begin{proof}  Let $\chi$ denote the characteristic function of the relation $\text{RI}B$.
Let $\Gamma$ be the three place relation on $\mathbb{N}$ such that $\Gamma (n,\ell , \ell ')$ holds iff
$n > 0,\ \ell, \ell ' < 2^{n(M+1)}$ and 
\begin{equation}
\beta (n,\text{\textnormal{IStep}}(n,\ell ),\ell ')\cdot
\chi(n,\text{\textnormal{IStep}}(n,\ell ),\ell ') =
\alpha (n,\text{\textnormal{IStep}}(n,\ell ),\ell ).
\end{equation}

Much as in {[}\ref{DSS}{]} (where the analogue of Equation (16) is Equation (7)), we show, first, that $\Gamma$ is
P-TIME decidable relative to $\tau^O_1$ and, given $n > 0$ and $\ell < 2^{n(M+1)}$ there is unique $\ell '$
such that $\Gamma (n,\ell , \ell ')$ holds.  We next argue 
that, as a function of $(n, \ell )$, the unique solution, $\ell '$, is
computable relative to $\tau^O_1$ in time polynomial in $n$.  Finally, we show that $\Gamma (n, \ell , \ell ')$
holds iff $\ell ' = F(n,\ell )$.  Of course this will suffice to show that .

By Proposition 1, the functions $\alpha$ and IStep are P-TIME relative to $\tau^O_1$, and by Lemma 2, the function
$\beta$ is, as well.  Since $\text{RI}B$ is P-TIME decidable, the function $\chi$ is P-TIME.  It is then clear that $\Gamma$
is P-TIME decidable.

Now suppose $n > 0$ and $\ell < 2^{n(M+1)}$.  Let $t = \text{IStep}(n,k)$ and, as in Remark 3, 
let $s = 1 + \ell - \text{SMC}^O(n, t)$.  Note that $\ell = a_{n,t,s}$ (and that
$s = \alpha (n, t, \ell )$, so, in particular, $\alpha (n, t, \ell ) > 0$).
By part (2) of Lemma 1, $s \leq \beta_{n,t}$, so $b_{n,t,s}$ is
defined and $b_{n,t,s} \in \text{I}B_{n,t}$.  Of course we have that
$s = \beta \left ( b_{n,t,s}\right )$.  It is then clear that
$\displaystyle{\Gamma \left ( n,\ell , b_{n,t,s}\right )}$ 
and so there is a solution $\ell '$ to $\Gamma (n,\ell , \ell' )$.  As was the case in {[}\ref{DSS}{]}, it is the fact of multiplying by
$\chi (n, t, \ell )$ that makes $\ell ' = b_{n,t,s}$ the unique solution, since this guarantees that if $\Gamma ( n, \ell , \ell'')$
holds then $\ell '' \in \text{I}B_{n,t}$.  Given this, and that $s = \alpha ( n,t,\ell ) = \beta (n,t,\ell '')$ it is then clear
that $\ell '' = \ell ' = b_{n,t,s}$.   This also makes it clear that, as a function of $(n, \ell )$, the unique solution, $\ell '$, is
computable relative to $\tau^O_1$ in time polynomial in $n$:  this is simply because the function $b$ is P-TIME
relative to $\tau^O_1$ (by Corollary 4), while $t$ and $s$ are also computable in time polynomial in $n$, relative to
$\tau^O_1$, because $t = \text{IStep}(n, \ell )$ and $s = 1 + \ell - \text{SMC}^O(n,t)$.
\end{proof} 
Combining Corollary 1 and Theorem 2 immediately gives:
\begin{theorem}  The trim strong triangular array representation of $\left\{ S^*_n\right \}$ corresponding to the 
sequence of admissible permutations encoded by $F$ is P-TIME.\qed
\end{theorem}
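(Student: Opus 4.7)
The plan is to argue that Theorem 3 follows by routine translation from Theorem 2 via the canonical bijection of Corollary 1. Concretely, the sequence of admissible permutations $\{F_n\}$ encoded by $F$ corresponds, via Equation (5) in the proof of Theorem 1, to the trim strong triangular array $\left(R^*_{n,i}\vert n \in \mathbb{Z}^+,\ 1 \leq i \leq n\right)$ defined by $R^*_{n,i}(x) = R_i(y)$ for any $y \in E_{n,F_n(\ell)}$ where $x \in D_{n,\ell}$. Equivalently, $R^*_{n,i}(x) = \left(\text{I}\vec{o}_n(F(n,\ell))\right)_i$, since the value depends only on $F(n,\ell)$ and $i$.

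The remaining task is to exhibit an algorithm that, given $n$, $i$ with $1 \leq i \leq n$, and the first $n(M+1)$ bits of $x$ (which suffice because each $R^*_{n,i}$ is $n$-trim), produces $R^*_{n,i}(x)$ in time polynomial in $n$ relative to $\tau^O_1$. The algorithm proceeds: (i) read off the unique $\ell < 2^{n(M+1)}$ with $x \in D_{n,\ell}$ directly from the given bits, via the formula for $\text{I}\vec{b}_n(\ell)$ in (2.1.3); (ii) compute $\ell' := F(n,\ell)$, which by Theorem 2 is P-TIME relative to $\tau^O_1$; (iii) compute $\text{I}\vec{b}_n(\ell')$ (a reverse binary expansion, trivially P-TIME); (iv) extract the length $M+1$ subword $\left(\left(\text{I}\vec{b}_n(\ell')\right)_{j_{i,k}}\right)_{k=1}^{M+1}$, using the closed form $j_{i,r} = \left(\binom{i}{2}+(r-1)\right)(M+1)+i$ from (3.1.3); (v) apply the fixed bijection $\varsigma_s \mapsto o_s$ of (1.1) to this subword to obtain the outcome $R^*_{n,i}(x)$. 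Steps (i), (iii), (iv), (v) are outright P-TIME, while (ii) is P-TIME relative to $\tau^O_1$, so the entire procedure is P-TIME relative to $\tau^O_1$.

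Since the triangular array $\left(R^*_{n,i}\right)$ produced this way is exactly the representation associated with $\{F_n\}$ under the canonical bijection of Corollary 1, this establishes the theorem. There is essentially no obstacle to overcome: all the analytic and combinatorial work has already been absorbed into Theorem 2 (which controls the expensive step (ii)) and into the definition of the triangular array of positive integers in (1.1) (which makes the bit-position arithmetic in step (iv) polynomially computable). The argument is therefore a clean translation from the language of admissible permutations back to the language of strong trim triangular representations.
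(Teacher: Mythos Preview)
Your approach is exactly the paper's: Theorem 3 is stated with a bare \qed and the preceding sentence says it ``follows immediately from Theorem 2 and Corollary 1.'' You have simply unpacked that immediacy, which is fine.

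One small indexing slip in step (iv): the bitstring $\text{I}\vec{b}_n(\ell')$ is indexed by $\{1,\ldots,n(M+1)\}$, whereas the $j_{i,k}$ from (3.1.3) are positions in $\mathbb{Z}^+$ and can be far larger than $n(M{+}1)$ (e.g.\ $j_{n,M+1}\sim \binom{n}{2}(M{+}1)$). What you actually want is the bitstring $\vec{e}_{\ell'}\in\{0,1\}^{\overline{J}^n}$ from (2.1.2), which corresponds to $\text{I}\vec{b}_n(\ell')$ via the order-preserving bijection between $\overline{J}^n$ and $\{1,\ldots,n(M+1)\}$. Since the $\overline{J}_i$ come in consecutive blocks (each of size $M{+}1$, as noted in (1.2.2)), the element $j_{i,k}$ sits at position $(i-1)(M{+}1)+k$ in the increasing enumeration of $\overline{J}^n$. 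So the correct extraction is
\[
\bigl(\text{I}\vec{b}_n(\ell')\bigr)_{(i-1)(M+1)+k},\qquad k=1,\ldots,M{+}1,
\]
rather than $\bigl(\text{I}\vec{b}_n(\ell')\bigr)_{j_{i,k}}$. This is still plainly P-TIME, so the argument goes through once the index is fixed.
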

\subsection{Concluding Remarks: Retrospective Comparisons with \textnormal{{[}\ref{DSS}{]}}}
\label{subsec:3.3}
The list of results of {[}\ref{DSS}{]} for which we have presented no clear analogues in this paper is as follows:
\begin{enumerate}
\item  Propositions 1 and 2 in (3.2) of {[}\ref{DSS}{]},
\item  All of the numbered items of \S 4 of {[}\ref{DSS}{]},
\item  Corollary 4 and Remark 5 of (3.5) of {[}\ref{DSS}{]}.
\item  Proposition 3 of (3.3) of {[}\ref{DSS}{]}, Equation (8) of (3.4) of {[}\ref{DSS}{]}. 
\end{enumerate}

Of these, the results listed in items 1., 2., above, do have clear analogues.  Further, the proofs from {[}\ref{DSS}{]} go
over in a straightforward fashion.  Nevertheless, we have chosen to omit them on the grounds that the important point had already been
made in the setting of {[}\ref{DSS}{]}.  For the sake of completeness, we recall that Proposition 1 of {[}\ref{DSS}{]} shows that ``trimness does not
come for free'' by constructing a non-trim strong triangular array representations starting from a trim strong one, while Proposition
2 of {[}\ref{DSS}{]} is a ``non-persistence'' result, showing that in {\it any } sequence, $\left ( \pi_n\vert n \in \mathbb{Z}^+\right )$, of
admissible permutations and any $n$, it is never true that $\pi_{n+1}$ extends $\pi_n$ and that in {\it any } trim strong triangular
array representation of $\left \{ S^*_n\right\}$, it is never true that all of the random variables $R^*_{n,i}$ in the ``$n^{\text{th}}$ row reappear among the $R^*_{n+1,i}$ in the $n+1^{\text{th}}$ row''.  The material of \S 4 of {[}\ref{DSS}{]} dealt with the question of
constructing variants, $\left ( G_n\vert n \in \mathbb{Z}^+\right )$ and $\left ( H_n\vert n \in \mathbb{Z}^+\right )$ of
(that paper's version of) the sequence $\left ( F_n\vert n \in \mathbb{Z}^+\right )$.  The motivation for the variants is to
build some additional desirable properties into the individual $G_n$ and $H_n$; for example, the $G_n$ have the property
that if (in the notation of {[}\ref{DSS}{]}) $\ell \in A_{n,i} \cap B_{n,i}$, then $G_n(\ell ) = \ell$. 

The situation is different, unfortunately, regardiing the results listed in item 4. above.  These {[}\ref{DSS}{]} results involve obtaining a simple
expression for the base-2 exponential function in terms of {\it any } sequence  $\left ( \pi_n\vert n \in \mathbb{Z}^+\right )$, of
admissible permutations (Proposition 3 of {[}\ref{DSS}{]}) and an even simpler one in terms of the sequence 
$\left ( F_n\vert n \in \mathbb{Z}^+\right )$ (Equation (8) of {[}\ref{DSS}{]} for the {[}\ref{DSS}{]} version of this sequence).  There are no clear
analogues of these results, although there is a clear analogue of the method by which they were obtained in {[}\ref{DSS}{]}.  The basis of that method was the equivalence between being a power of 2 and having (Hamming) weight 1 (and so being a member of $B_{n,1}$).  The analogue of the approach taken in {[}\ref{DSS}{]} for a general sequence, $\left ( \pi_n\vert n \in \mathbb{Z}^+\right )$, of
admissible permutations is to consider the increment by 1 of the sum of all of the weight 1 elements, i.e, of all of the members of
$B_{n,1}$ (note:  $\gamma^O_{n,1}$ is still equal to $n$ as is easy to see).  The problem is that this may no longer
be the sum of the pure 2-powers: $\displaystyle{ \sum_{s=1}^n\pi_n(n)}$ is still the natural thing to consider, but it will depend on
$O$ in ways that make it difficult to identify what function this gives (though it seems quite plausible that it is at least
as complex as the exponential function with base 2).Here it is, once
again, the absence of a transparent connection between the notion of Weight and the patterns of 1's in binary expansions

Finally, concerning the results listed in item (3), above,
the content of Corollary 4 of {[}\ref{DSS}{]} is that the function SBC of that paper can be simply computed from $F$ and $\text{Inv}F$,
the (natural encoding of the) sequence of inverses, $\left\{ F^{-1}_n\right\}$.  The natural analogue, here would have
the function $\tau^O_1$ in place of SBC, and we conjecture that this is true, but we have yet to work this out.  Whether or not
SBC can be obtained from $F$ alone (or, somewhat more plausibly, from $\text{Inv}F$ alone is the subject of the speculative Remark 5 of {[}\ref{DSS}{]}; similar questions arise naturally here, but, here too, we have yet to work this out.

\end{document}